\documentclass[11pt]{amsart}
%%%%%%%%%%%%%%%%%%%%%%%%%%%%%%%%%%%%%%%%%%%%%%%%%%%%%%%%%%%%%%%%%%%%%%%%%%%%%%%%%%%%%%%%%%%%%%%%%%%%%%%%%%%%%%%%%%%%%%%%%%%%
\usepackage{amssymb,latexsym,cite}
\usepackage{pstcol,pstricks,color}
\usepackage{amsfonts, amsmath}
\usepackage{graphicx}

\setlength{\unitlength}{1mm} \topmargin 0 pt \textheight 46\baselineskip
\advance\textheight by \topskip \setlength{\parindent}{0pt} \setlength{\parskip}{3pt plus
2pt minus 1pt} \setlength{\textwidth}{155mm} \setlength{\oddsidemargin}{5.6mm}
\setlength{\evensidemargin}{5.6mm} \numberwithin{equation}{section}

\newtheorem{theorem}{Theorem}[section]

\newtheorem{corollary}[theorem]{Corollary}

\newtheorem{lemma}[theorem]{Lemma}

\newtheorem{example}[theorem]{Example}

\def\mm{{\mathbf m} }

%\defL{\ell is}
%\def\ld{\ell ds}

\DeclareMathOperator{\E}{\mathbf{E}}

\DeclareMathOperator{\bP}{\mathbf P}

\begin{document}
\title[Longest increasing subsequences]{Permutations avoiding 312 and another pattern, Chebyshev polynomials and longest increasing subsequences}
\author{Toufik Mansour}
\address{Department of Mathematics, University of Haifa, 3498838 Haifa, Israel}
\email{tmansour@univ.haifa.ac.il}
\author{g\"{o}khan Y\i ld\i r\i m}
\address{Department of Mathematics, Bilkent University, 06800 Ankara, Turkey}
\email{gokhan.yildirim@bilkent.edu.tr} \subjclass[2010]{05A05, 05A15}
\keywords{Longest increasing subsequence problem, Pattern-avoiding permutations, Chebyshev polynomials, Generating functions}

\maketitle
\thispagestyle{empty}

\section*{abstract}
We study the longest increasing subsequence problem for random permutations avoiding the pattern $312$ and another pattern $\tau$ under the uniform probability distribution. We determine the exact and asymptotic formulas for the average length of the longest increasing subsequences for such permutation classes specifically when the pattern $\tau$ is monotone increasing or decreasing, or any pattern of length four.

%================================================
\section{Introduction}
%-------------------------------------------------
The study of longest increasing subsequences for uniformly random permutations is a wonderful example of a research program which begins with an easy-to-state question whose solution makes surprising connections with different branches of mathematics, and culminates with many astonishing results that have interesting applications in statistics, computer science, physics and biology, see \cite{AD, Cor, De1, De2}.
Let $\sigma=\sigma_1\sigma_2\ldots\sigma_n$ be a permutation of
$[n]:=\{1,\ldots, n\}$. We denote by  $L_n(\sigma)$ the length of a
\textit{longest increasing subsequence} in $\sigma$, that
is,
$$L_n(\sigma)=\max\{k\in[n]:\hbox{there exist }1\leq
i_1<i_2<\cdots<i_k\leq n \hbox{ and }
\sigma_{i_1}<\sigma_{i_2}<\cdots<\sigma_{i_k}\}.$$
%$S_n$ be the set of all permutations of length $n$.
Note that, in general, such a subsequence is not unique. Erd\"os-Szekeres theorem \cite{ES}
states that every permutation of length $n\geq (r-1)(s-1)+1$
contains either an increasing subsequence of length $r$ or a
decreasing subsequence of length $s$. After this celebrated
result, many researchers worked on the problem of determining
the asymptotic behavior of $L_n$ on $S_n$, the set of all permutations of length $n$, under the \textit{uniform} probability distribution \cite{Hamm, LoSh, Sc, Ulam, VK}. The problem has been studied by several distinct methods from probability theory, random matrix theory, representation theory and statistical mechanics, see \cite{AD, BDS, Joh, Rom} and references therein. It is finally known that $\E(L_n)\sim 2\sqrt n$ \cite{LoSh, Sep, VK} and $n^{-1/6}(L_n-\E(L_n))$ converges in distribution to the Tracy-Widom distribution as $n\to \infty$ \cite{BDJ, TW}. For a thorough exposition of the subject, see the books \cite{BDS, Rom}.

We shall study the longest increasing subsequence problem for some
pattern-avoiding permutation classes. First, we shall recall some
definitions. For permutations $\tau\in S_k$ and $\sigma\in S_n$,
we say that $\tau$ appears as a \textit{pattern} in $\sigma$ if
there is a subsequence
$\sigma_{i_1}\sigma_{i_2}\cdots\sigma_{i_k}$ of length $k$ in
$\sigma$ which has the same relative order of $\tau$, that is,
$\sigma_{i_s}<\sigma_{i_t}$ if and only if $\tau_s<\tau_t$ for all
$1\leq s,t\leq k$. For example, the permutation $312$ appears as a pattern in $52341$ because it has the subsequences $523--$, $52-4-$ or $5-34-$. If $\tau$ does not appear as a pattern in
$\sigma$, then $\sigma$ is called a $\tau$-\textit{avoiding}
permutation. We denote by $S_n(\tau)$ the set of permutations
of length $n$ that avoid the pattern $\tau$. More generally, for
a set $T$ of patterns, we use the notation $S_n(T)=\cap_{\tau\in
T}S_n(\tau)$. Pattern-avoiding permutations have been studied from
combinatorics perspectives for many years, for an introduction to the subject, see \cite{Bon, Vatter}. Recently probabilistic
study of random pattern-avoiding permutations has also been
initiated, and many interesting results have already appeared \cite{BBFGP, HRS1, HRS2, HRS3, Jan, MP, MY, MRS}.

The longest increasing subsequence problem for the pattern-avoiding permutations was first studied for the patterns of length tree, that is, for $\tau\in S_3$ on $S_n(\tau)$ with uniform probability distribution in \cite{DHW}. The case $S_n(\tau^1,\tau^2)$ with $\tau^1,\tau^2 \in S_3$ is studied for all possible cases in \cite{MY}. One of the corollaries of our main result, Theorem~\ref{mainthm}, covers the case $S_n(312,\tau)$ with either $\tau\in S_3(312)$ or $\tau\in S_4(312)$ and hence add some new results to this research program.

Note that for any $\sigma\in S_n$, we have
\begin{equation}
L_n(\sigma)=L_n(\sigma^{rc})=L_n(\sigma^{-1})
\end{equation} where the \textit{reverse, complement} and \textit{inverse} of $\sigma$ are defined as $\sigma^r_i=\sigma_{n+1-i}$, $\sigma^c_i=n+1-\sigma_i$ and $\sigma^{-1}_i=j$ if and only if $\sigma_j=i$, respectively. These symmetries significantly reduce the number of cases needed to be studied.

In a different direction of research, the longest increasing subsequence problem has also been studied on $S_n$ under some \textit{non-uniform} probability distributions such as Mallow distribution \cite {BP, BB, MS} and in some other context such as colored permutations \cite{Bor}, independent-identically distributed sequences, and random walks \cite{ABP, DZ1}.

The paper is organized as follows. We present our main result, Theorem~\ref{mainthm}, in Section~\ref{general} and as a first case apply it to $S_n(312,\tau)$ with $\tau\in S_3(312)$ which gives an alternative proof for some cases considered in \cite{MY} through generating functions. In Section~\ref{secspecial}, we consider three specific longer patterns where $\tau$ is the monotone increasing/decreasing pattern or the pattern $(m-1)m(m-2)(m-3)\cdots321$. The last section presents the results for the case $S_n(312,\tau)$ with $\tau\in S_4(312)$.

For the rest of the paper, we only deal with random variables defined on sets $S_n(312,\tau)$ under the uniform probability distribution. That is,  for any subset $A\subset S_n(312,\tau)$, $\bP^{\tau}(A)=\frac{|A|}{|S_n(312,\tau)|}$. The notation $\E^{\tau}(X)$ denotes the expected value of a random variable $X$ on $S_n(312,\tau)$ under $\bP^{\tau}.$ We denote the coefficient of $x^n$ in a generating function $G(x)$ by $[x^n]G$. For two sequences $\{a_n\}_{n\geq 1}$ and $\{b_n\}_{n\geq 1}$, we write $a_n\sim b_n$ if $\lim_{n\rightarrow\infty}\frac{a_n}{b_n}=1$.

%For $\rho\in S_{k-1}$,  the permutation $\tau=(\rho+1)1\in S_k$ denotes $\tau=(\rho_{k-1}+1)(\rho_{k-2}+1)\cdots (\rho_1+1)1$.

\section{General Results}\label{general}
Note that if $\tau \notin S_k(312)$, then $S_n(312,\tau)=S_n(312)$ for all $n\geq 1$.
For any $\tau\in S_k(312)$ with $k\geq 2$, we define the generating function
\begin{equation}\label{genfunc}
F_{\tau}(x,q)=\sum_{n\geq 0}\sum_{\sigma \in
S_n(312,\tau)}x^nq^{L_n(\sigma)}
\end{equation}
with $F_1(x,q)\equiv1$.

We will use the following facts repeatedly in our proofs:
$$\E^{\tau}(L_n)=\frac{1}{{s_n}}[x^n]\frac{\partial}{\partial q} F_{\tau}(x,q)\Bigr|_{\substack{q=1}} \hbox{ and } \E^{\tau}(L^2_n)=\frac{1}{{s_n}}[x^n]\left(\frac{\partial^2}{\partial q^2} F_{\tau}(x,q)\Bigr|_{\substack{q=1}}+\frac{\partial}{\partial q} F_{\tau}(x,q)\Bigr|_{\substack{q=1}} \right)$$
where $s_n=|S_n(312,\tau)|$. Note also that $s_n=[x^n]F_{\tau}(x,1)$.

For any sequence $w= w_1w_2\cdots w_m$ of $m$-distinct integers, we define the corresponding {\em reduced form} to be the unique permutation $v=v_1v_2\cdots v_m$ where $v_i=\ell$ if the $w_i$ is the $\ell$-th smallest term in $w$. For example, the reduced form of $253$ is $132$. For any sequence $w$, we define $F_w(x,q)$ to be $F_v(x,q)$ where $v$ is the {\it reduced form} of $w$.

In order to determine $F_\tau(x,q)$ explicitly, we shall introduce some notations.
Let $w^1, w^2$ be two sequences of integers, we write $w^1<w^2$ or $w^2>w^1$ if $w^1_i<w^2_j$ for all possible $i,j$. Recall that for any permutation $\tau=\tau_1\cdots\tau_k$, $\tau_i$ is called a {\em right-to-left minimum} if $\tau_i<\tau_j$ for all $j>i$. Note that, by definition, the last entry $\tau_k$ is a right-to-left minimum. Let $\tau\in S_k(312)$ and let $m_0=1<m_1<\ldots<m_r$ be the right-to-left minima of $\tau$ written from left to right. Then $\tau$ can be represented as
$$\tau=\tau^{(0)}m_0\tau^{(1)}m_1\cdots\tau^{(r)}m_r,$$
where $m_0<\tau^{(0)}<m_1<\tau^{(1)}<\cdots<m_r<\tau^{(r)}$, and $\tau^{(j)}$ (may possibly be empty) avoids $312$ for each $j=0,1,\ldots,r$. In this case we call this representation the {\em normal form} of $\tau$. For instance, if $\tau=214365$, then the normal form of $\tau$ is $\tau^{(0)}1\tau^{(1)}3\tau^{(2)}5$ with $\tau^{(0)}=2$, $\tau^{(1)}=4$ and $\tau^{(2)}=6$.

Assume that $\tau\in S_k(312)$ is given in its normal form, that is, $\tau=\tau^{(0)}m_0\tau^{(1)}m_1\cdots\tau^{(r)}m_r$. We use $\Theta^{(j)}$ to denote $\tau^{(0)}m_0\tau^{(1)}m_1\cdots\tau^{(j)}m_j$, which we call the $j^{th}$ {\em prefix} of $\tau$. We use $\Theta^{<j>}$ to denote the reduced form of $\tau^{(j)}m_j\tau^{(j+1)}m_{j+1}\cdots\tau^{(r)}m_r$, which we call the $j^{th}$ {\em suffix} of $\tau$. We set $\Theta^{(-1)}=\emptyset$.

The following lemma plays a key role in the proof of Theorem~\ref{mainthm}. For the sake of the reader, we provide a proof for it which is indeed very similar to the main result of \cite{MV1}.

\begin{lemma}\label{lemma1}
Assume that $\tau\in S_k(312)$ is given in its normal form,
$\tau=\tau^{(0)}m_0\tau^{(1)}m_1\cdots\tau^{(r)}m_r$. Then
$\sigma=\sigma'1\sigma''$ avoids both $312$ and $\tau$ if and only
if there exists $i$, $0\leq i\leq r$, such that $\sigma'1$ avoids
$\Theta^{(i)}$ and contains $\Theta^{(i-1)}$, while $\sigma''$
avoids $\Theta^{<i>}$.
\end{lemma}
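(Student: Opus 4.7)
The plan hinges on the elementary observation that in a 312-avoiding permutation $\sigma=\sigma'1\sigma''$, every entry of $\sigma'$ is strictly less than every entry of $\sigma''$: otherwise an element $a$ in $\sigma'$ with $a>b$ for some $b$ in $\sigma''$ would produce the 312 pattern $a,1,b$. Combined with the fact that, by the normal form, the entries of $\Theta^{(j)}$ are precisely the values $\{1,2,\ldots,|\Theta^{(j)}|\}$ (because $\tau^{(s)}$ occupies the values in the interval $(m_s,m_{s+1})$), this lets us freely split and concatenate copies of patterns across the boundary at $1$.

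For the forward direction, assume $\sigma$ avoids both $312$ and $\tau$, and let $i$ be the smallest index in $\{0,1,\ldots,r\}$ such that $\sigma'1$ avoids $\Theta^{(i)}$. Such $i$ exists since $\Theta^{(r)}=\tau$ and $\sigma'1$ inherits $\tau$-avoidance from $\sigma$; by minimality (with $\Theta^{(-1)}=\emptyset$), $\sigma'1$ contains $\Theta^{(i-1)}$. Assume for contradiction that $\sigma''$ also contains $\Theta^{<i>}$. Picking copies of $\Theta^{(i-1)}$ in $\sigma'1$ and of $\Theta^{<i>}$ in $\sigma''$ and concatenating them yields a subsequence of $\sigma$ whose first block has reduced form $\Theta^{(i-1)}$, whose second block has reduced form $\Theta^{<i>}=\mathrm{red}(\tau^{(i)}m_i\cdots\tau^{(r)}m_r)$, and whose first-block values lie strictly below the second-block values. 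Since the decomposition $\tau=\Theta^{(i-1)}\cdot\tau^{(i)}m_i\cdots\tau^{(r)}m_r$ enjoys exactly the same value-separation (the prefix uses values $\{1,\ldots,m_i-1\}$, the suffix uses $\{m_i,\ldots,k\}$), the concatenated subsequence has reduced form $\tau$, contradicting the $\tau$-avoidance of $\sigma$.

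For the backward direction, suppose the three conditions hold and assume for contradiction that $\sigma$ contains a copy of $\tau$. Let $p$ be the number of elements of the copy lying in $\sigma'1$; the value-separation observation forces $\{\tau_1,\ldots,\tau_p\}=\{1,\ldots,p\}$, and a short inspection of the normal form shows this can happen only when $p=|\Theta^{(s)}|$ for some $s\in\{-1,0,1,\ldots,r\}$ (with $\Theta^{(-1)}=\emptyset$ and $\Theta^{<r+1>}=\emptyset$ handling the boundary cases $p=0$ and $p=k$). Then $\sigma'1$ contains $\Theta^{(s)}$ while $\sigma''$ contains the reduced suffix $\Theta^{<s+1>}$. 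If $s\geq i$, then $\Theta^{(i)}$ occurs as a prefix of $\Theta^{(s)}$ and hence as a pattern in $\sigma'1$, contradicting the hypothesis that $\sigma'1$ avoids $\Theta^{(i)}$; if $s<i$, then $\Theta^{<i>}$ occurs as a reduced-form suffix pattern of $\Theta^{<s+1>}$ and hence as a pattern in $\sigma''$, contradicting the hypothesis that $\sigma''$ avoids $\Theta^{<i>}$.

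The main obstacle I anticipate is the last combinatorial check: verifying that the value-separation constraint really does pin down $p$ to be one of the lengths $|\Theta^{(s)}|$, ruling out any split strictly inside some $\tau^{(s)}$ block, which would leave the value $m_s$ absent from the set of values used in positions $1,\ldots,p$. Once this is in place, the clean dichotomy $s\geq i$ versus $s<i$ matches the two hypothesized avoidances on $\sigma'1$ and $\sigma''$ exactly, and the monotonicity of prefix/suffix containment in the index $j$ makes both contradictions immediate.
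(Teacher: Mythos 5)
Your proof is correct and follows essentially the same route as the paper's: both arguments rest on the value separation $\sigma'<\sigma''$ in a $312$-avoider, locate the index $i$ via the extremal prefix $\Theta^{(i)}$ avoided by $\sigma'1$, and derive both directions by splitting a putative copy of $\tau$ at the position of $1$. If anything, you make explicit the step the paper leaves implicit --- that the split of a copy of $\tau$ across the boundary must occur at one of the lengths $|\Theta^{(s)}|$ because $\{\tau_1,\dots,\tau_p\}$ must equal $\{1,\dots,p\}$ --- which is a worthwhile addition rather than a deviation.
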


\begin{proof}
We denote the set of all permutations, including the empty
permutation, that avoid both $312$ and $\tau$ by $T_\tau$, and that avoid
both $312$ and $\tau$ and contain $\tau'$ by $T_{\tau;\tau'}$. Let $\sigma$ be any nonempty
permutation in $T_\tau$. We can write $\sigma$ as
$\sigma=\sigma'1\sigma''$. Note that $\sigma$ avoids $312$ if and only
if $\sigma'<\sigma''$ and both $\sigma'$ and $\sigma''$ avoid
$312$. Note also that $\Theta^{(j)}$ is a prefix of $\Theta^{(j+1)}$
and $\Theta^{<j+1>}$ is a suffix of $\Theta^{<j>}$ for all
$j=0,1,\ldots,r-1$, and $\Theta^{(r)}=\Theta^{<0>}=\tau$. We have
\begin{align*}
T_\tau&=T_{\Theta^{(0)}}\cup T_{\tau;\Theta^{(0)}},\\
T_{\tau;\Theta^{(s)}}&=T_{\Theta^{(s+1)};\Theta^{(s)}}\cup
T_{\tau;\Theta^{(s+1)}},\qquad s=0,1,2,\ldots,r-1,
\end{align*}
with $T_{\tau;\Theta^{(r)}}=T_{\tau;\tau}=\emptyset$.  Therefore,
\begin{align*}
T_\tau =T_{\Theta^{(0)}}\cup T_{\Theta^{(1)};\Theta^{(0)}}\cup
 T_{\Theta^{(2)};\Theta^{(1)}}\cup\cdots\cup
 T_{\Theta^{(r)};\Theta^{(r-1)}}.
\end{align*}
Thus, since $\sigma\in T_\tau$ we have that $\sigma' 1\in T_\tau$,
so there exists $i$, $0\leq i\leq r$, such that $\sigma' 1\in
T_{\Theta^{(i)};\Theta^{(i-1)}}$. But then we must have $\sigma''\in
T_{\Theta^{<i>}}$. Hence, $\sigma\in T_\tau$ implies that

$\qquad (*)\qquad$ there exists $i$, $0\leq i\leq r$, such that
$\sigma'1\in T_{\Theta^{(i)};\Theta^{(i-1)}}$ and $\sigma''\in
T_{\Theta^{<i>}}$.

Note that in the case $i=0$, $\sigma'1$ avoids
$\Theta^{(0)}$, while $\sigma''$ avoids $\Theta^{<0>}$, and we
defined $\Theta^{(-1)}=\emptyset$. Then clearly $\sigma'1$
contains $\Theta^{(-1)}$.

On the other hand, let both $\sigma'1$ and $\sigma''$ avoid $312$ and
satisfy the condition $(*)$, that is, there exists $i$, $0\leq i\leq
r$, such that $\sigma'1\in T_{\Theta^{(i)};\Theta^{(i-1)}}$ and
$\sigma'' \in T_{\Theta^{<i>}}$. Thus, both $\sigma'1$ and $\sigma''$ avoid
$\tau$. If $\sigma=\sigma'1\sigma''$ contains $\tau$, then
$\sigma'1$ contains $\Theta^{(j-1)}$ and $\sigma''$ contains
$\Theta^{<j>}$, for some $j$, $0\leq j\leq r$. If we choose $j$ to
be maximal (it exists since $\sigma'1$ avoids $\Theta^{(r)}=\tau$
and $\Theta^{(j)}$ is a prefix of $\Theta^{(j+1)}$), then we see
that $\sigma'1$ avoids $\Theta^{(j)}$ and contains $\Theta^{(j-1)}$
while $\sigma''$ contains $\Theta^{<i>}$, which contradicts $(*)$.
Thus the condition $(*)$ implies that $\sigma\in T_\tau$. This
completes the proof.
\end{proof}

Note that in the above lemma $\sigma'1$ avoids $\Theta^{(i)}$ and contains $\Theta^{(i-1)}$ if and only if $\sigma'$ avoids $\Theta^{(i)}$ and contains $\Theta^{(i-1)}$, for all $2\leq i\leq r$.

Our main result gives a functional equation for the generating
function.

\begin{theorem}\label{mainthm}
Let $\tau\in S_k(312)$ be given in its normal form $\tau^{(0)}m_0\tau^{(1)}m_1\cdots\tau^{(r)}m_r$ with $k\geq2$. 
\begin{itemize}
\item If $\tau^{(0)}=\emptyset$, then
\begin{align*}
F_\tau(x,q)&=1+xq+x(F_{\tau}(x,q)-1)+xq(F_{\Theta^{<1>}}(x,q)-1)\\
&+x\sum_{j=1}^{r}(F_{\Theta^{(j)}}(x,q)-F_{\Theta^{(j-1)}}(x,q))(F_{\Theta^{<j>}}(x,q)-1);
\end{align*}
\item if $\tau^{(0)}\neq\emptyset$, then
\begin{align*}
F_\tau(x,q)&=1+xq+x(F_{\tau^{(0)}}(x,q)-1)\delta_{r=0}+x(F_{\tau}(x,q)-1)\delta_{r\geq1}+xq(F_\tau(x,q)-1)\\
&+x\sum_{j=2}^{r}(F_{\Theta^{(j)}}(x,q)-F_{\Theta^{(j-1)}}(x,q))(F_{\Theta^{<j>}}(x,q)-1)\\
&+x(F_{\Theta^{(1)}}(x,q)-F_{\tau^{(0)}}(x,q))(F_{\Theta^{<1>}}(x,q)-1)\delta_{r\geq1}\\
&+x(F_{\tau^{(0)}}(x,q)-1)(F_{\tau}(x,q)-1),
\end{align*}
\end{itemize}
where we define $F_\emptyset(x,q)=0$, and $\delta_\chi$ denotes $1$ if the condition $\chi$ holds, and $0$ otherwise.
\end{theorem}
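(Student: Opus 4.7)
The plan is to decompose each nonempty $\sigma\in S_n(312,\tau)$ by the position of the entry $1$, writing $\sigma=\sigma'1\sigma''$. Since $\sigma$ avoids $312$, we have $\sigma'<\sigma''$ pointwise, so every entry of $\sigma'1$ is strictly less than every entry of $\sigma''$; consequently, concatenating a longest increasing subsequence of $\sigma'1$ with one of $\sigma''$ yields an LIS of $\sigma$, and
\[
L_n(\sigma)=L_{|\sigma'|+1}(\sigma'1)+L_{|\sigma''|}(\sigma'').
\]
Thus $x^{|\sigma|}q^{L_n(\sigma)}$ factors multiplicatively over the pair $(\sigma'1,\sigma'')$, and Lemma~\ref{lemma1} partitions the sum over nonempty $\sigma\in T_\tau$ according to the unique $i\in\{0,\dots,r\}$ for which $\sigma'1\in T_{\Theta^{(i)};\Theta^{(i-1)}}$ and $\sigma''\in T_{\Theta^{<i>}}$. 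The contribution of each $i$ thus factors as $A_i(x,q)\cdot B_i(x,q)$, where $B_i=F_{\Theta^{<i>}}(x,q)$ and $A_i=\sum x^{|\sigma'|+1}q^{L(\sigma'1)}$ summed over the admissible $\sigma'$.

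For $i\geq 2$, the remark after Lemma~\ref{lemma1} says the conditions on $\sigma'1$ translate verbatim to ``$\sigma'$ avoids $\Theta^{(i)}$ and contains $\Theta^{(i-1)}$'', forcing $\sigma'\neq\emptyset$ and $L(\sigma'1)=L(\sigma')$, so $A_i=x(F_{\Theta^{(i)}}-F_{\Theta^{(i-1)}})$. The boundary cases $i=0,1$ depend on $\tau^{(0)}$. If $\tau^{(0)}=\emptyset$, then $\Theta^{(0)}=1$, which no nonempty permutation avoids, so $A_0=0$; for $i=1$ the condition ``contains $\Theta^{(0)}=1$'' is automatic, yielding $A_1=xq+x(F_{\Theta^{(1)}}-1)$ (the $xq$ accounting for $\sigma'=\emptyset$). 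If $\tau^{(0)}\neq\emptyset$, the trailing $1$ of $\sigma'1$ naturally matches the trailing $1$ of $\Theta^{(0)}=\tau^{(0)}1$, so ``$\sigma'1$ avoids (resp.\ contains) $\Theta^{(0)}$'' is equivalent to ``$\sigma'$ avoids (resp.\ contains) $\tau^{(0)}$''; this gives $A_0=xq+x(F_{\tau^{(0)}}-1)$ and, for $r\geq 1$, $A_1=x(F_{\Theta^{(1)}}-F_{\tau^{(0)}})$.

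It remains to assemble $F_\tau(x,q)=1+\sum_{i=0}^r A_iB_i$. Writing $B_i=1+(B_i-1)$, the ``$A_i\cdot 1$'' parts telescope: in the $\tau^{(0)}\neq\emptyset,\,r\geq 1$ case, the identity $(F_{\tau^{(0)}}-1)+(F_{\Theta^{(1)}}-F_{\tau^{(0)}})+\sum_{i=2}^r(F_{\Theta^{(i)}}-F_{\Theta^{(i-1)}})=F_\tau-1$ delivers exactly the term $x(F_\tau-1)$, while the ``$A_i(B_i-1)$'' parts reproduce the explicit sums stated in the theorem. The two remaining subcases ($\tau^{(0)}=\emptyset$; and $\tau^{(0)}\neq\emptyset$ with $r=0$) are treated by the same mechanism, using the convention $F_1(x,q)\equiv 1$ to absorb the $i=1$ contribution into the displayed sum $\sum_{j=1}^r$. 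The main technical obstacle is this simultaneous bookkeeping: tracking whether $\tau^{(0)}$, $\sigma'$, or $\sigma''$ is empty, and how the appended $1$ interacts with the last entry of $\Theta^{(0)}$; once these cases are aligned, only telescoping and routine algebraic manipulation remain.
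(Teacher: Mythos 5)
Your proposal is correct and follows essentially the same route as the paper: decompose $\sigma=\sigma'1\sigma''$ at the minimum, invoke Lemma~\ref{lemma1} to split into the cases indexed by $i$, and sum the resulting products of generating functions. Your write-up is in fact slightly more careful than the paper's in two respects—you justify explicitly the additivity $L(\sigma)=L(\sigma'1)+L(\sigma'')$ that makes $q^{L}$ factor, and you verify the telescoping identity that reconciles the $\sum A_iB_i$ form with the stated formula—so no gaps to report.
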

\begin{proof}
Note that the contributions of the empty permutation and the
permutation of length $1$ to the generating function are $1$ and
$xq$, respectively. Henceforth, we assume that $\tau$ has at least two entries. Let $\sigma=\sigma'1\sigma''$ be any nonempty permutation which avoids both $312$ and $\tau$. 

{\bf Case $\tau^{(0)}=\emptyset$}: Since $\tau$ has at least two entries and $\tau^{(0)}=\emptyset$, we have that $r\geq1$. If $\sigma=\sigma'1$ with $\sigma'\neq\emptyset$, then we have the contribution of $x(F_\tau(x,q)-1)$. If $\sigma=1\sigma''$ with $\sigma''\neq\emptyset$, then we have the
contribution of $xq(F_{\Theta^{<1>}}(x,q)-1)$. As a last
case, we need to consider the permutations in the form of
$\sigma=\sigma'1\sigma''$ with $\sigma',\sigma''\neq\emptyset$. By Lemma \ref{lemma1}, the contribution of this case is given by
\begin{align*}
&x\sum_{j=2}^{r}(F_{\Theta^{(j)}}(x,q)-F_{\Theta^{(j-1)}}(x,q))(F_{\Theta^{<j>}}(x,q)-1)+x(F_{\Theta^{(1)}}(x,q)-1)(F_{\Theta^{<1>}}(x,q)-1)\\
&=x\sum_{j=1}^{r}(F_{\Theta^{(j)}}(x,q)-F_{\Theta^{(j-1)}}(x,q))(F_{\Theta^{<j>}}(x,q)-1),
\end{align*}
where in last equality we used that $\Theta^{(0)}=1$ and $F_{\Theta^{(0)}}(x,q)=1$.
By summing over all the contributions, we complete
the proof.

{\bf Case $\tau^{(0)}\neq\emptyset$}:
If $\sigma=\sigma'1$ with $\sigma'\neq\emptyset$, then we have the
contribution of $x(F_\tau(x,q)-1)$ when $r\geq1$, and
$x(F_{\tau^{(0)}}(x,q)-1)$ when $r=0$. If $\sigma=1\sigma''$ with $\sigma''\neq\emptyset$, then we have the
contribution of $xq(F_{\tau}(x,q)-1)$. As a last
case, we need to consider the permutations in the form of
$\sigma=\sigma'1\sigma''$ with $\sigma',\sigma''\neq\emptyset$. By
Lemma~\ref{lemma1}, the contribution of this case is given by
\begin{align*}
&x\sum_{j=2}^{r}(F_{\Theta^{(j)}}(x,q)-F_{\Theta^{(j-1)}}(x,q))(F_{\Theta^{<j>}}(x,q)-1)\\
&\qquad+x(F_{\Theta^{(1)}}(x,q)-F_{\tau^{(0)}}(x,q))(F_{\Theta^{<1>}}(x,q)-1)\delta_{r\geq1}+x(F_{\tau^{(0)}}(x,q)-1)(F_{\tau}(x,q)-1),
\end{align*}

 By summing over all the contributions, we complete
the proof.
\end{proof}

We can also deduce the rationality of the generating function $F_\tau(x,q)$ for any nonempty pattern $\tau$ by using the induction on $k$ with the observations in the proof of Theorem \ref{mainthm} and $F_1(x,q)=1$.

\begin{corollary}\label{mainthmrat}
For any $k\geq 1$ and $\tau\in S_k(312)$, the generating function $F_\tau(x,q)$ is a rational function in $x$ and $q$.
\end{corollary}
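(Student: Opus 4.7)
The plan is to prove the corollary by strong induction on $k=|\tau|$. The base case $k=1$ is immediate, since $F_1(x,q)\equiv 1$ by the definition given just after~\eqref{genfunc}. For the inductive step, suppose the claim holds for all patterns of length strictly less than $k$, where $k\geq 2$, and let $\tau\in S_k(312)$ be written in its normal form $\tau=\tau^{(0)}m_0\tau^{(1)}m_1\cdots\tau^{(r)}m_r$. I would apply Theorem~\ref{mainthm} to obtain a functional equation for $F_\tau(x,q)$, and then solve it as a linear equation in the single unknown $F_\tau$.

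The key structural observation is that every auxiliary generating function appearing on the right-hand side of the equation in Theorem~\ref{mainthm}, \emph{other than} $F_\tau$ itself, corresponds to a pattern of length strictly less than $k$. Indeed, when $\tau^{(0)}$ is nonempty it omits at least the entry $m_0$; each prefix $\Theta^{(j)}$ with $0\leq j\leq r-1$ is a proper initial segment of $\tau$; and each reduced suffix $\Theta^{<j>}$ with $1\leq j\leq r$ is obtained by removing the prefix $\Theta^{(j-1)}$, which is nonempty. By the inductive hypothesis, each of these generating functions lies in $\mathbb{Q}(x,q)$.

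Since $F_{\Theta^{(r)}}=F_\tau=F_{\Theta^{<0>}}$, the functional equation of Theorem~\ref{mainthm} simplifies, upon collecting all occurrences of $F_\tau$ on one side, to a linear identity
\[
(1-B(x,q))\,F_\tau(x,q)=A(x,q),
\]
with $A,B\in\mathbb{Q}(x,q)$. Solving gives $F_\tau=A/(1-B)$, which is rational in $x$ and $q$ provided $1-B$ is not the zero element of $\mathbb{Q}(x,q)$.

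The main obstacle is verifying this non-degeneracy of $1-B$. Inspecting the functional equation in both cases ($\tau^{(0)}=\emptyset$ and $\tau^{(0)}\neq\emptyset$), one sees that every term producing $F_\tau$ on the right-hand side carries an explicit factor of $x$, reflecting the fact that each such contribution counts permutations in which at least one entry (the ``$1$'' of the decomposition $\sigma=\sigma'1\sigma''$) has been fixed. Hence $B(0,q)=0$, so $1-B$ evaluates to $1$ at $x=0$ and is in particular nonzero in $\mathbb{Q}(x,q)$. The inversion is therefore legitimate, $F_\tau\in\mathbb{Q}(x,q)$, and the induction closes.
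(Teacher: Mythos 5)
Your proof is correct and follows the same route the paper sketches: induction on $k$ via the functional equation of Theorem~\ref{mainthm}, which is linear in $F_\tau$ because every other generating function appearing there corresponds to a strictly shorter pattern. The paper only gestures at this argument in one sentence, so your explicit verification that the coefficient $1-B$ is a unit (since $B(0,q)=0$) is a welcome completion of the same idea rather than a different approach.
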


Note that $F_1(x,q)=1$ (the only permutation that avoids the pattern $1$ is the empty permutation). Theorem \ref{mainthm} with $\tau=21$ gives
$$F_{21}(x,q)=1+xq+x(F_1(x,q)-1)+xq(F_{21}(x,q)-1)+x(F_1(x,q)-1)(F_{21}(x,q)-1),$$
where $F_1(x,q)=1$. Thus, $F_{21}(x,q)=\frac{1}{1-xq}$. Theorem \ref{mainthm} with $\tau=12$ gives
\begin{align*}
F_{12}(x,q)&=1+xq+x(F_{12}(x,q)-1)+xq(F_1(x,q)-1)+x(F_{1}(x,q)-1)(F_{12}(x,q)-1).
\end{align*}
Thus, $F_{12}(x,q)=\frac{1+xq-x}{1-x}=1+\frac{xq}{1-x}$. We summarize these results in the following corollary for future references.

\begin{corollary}\label{pat2s}
For $\tau \in S_2$, the generating functions are given by
$$F_{21}(x,q)=\frac{1}{1-xq}\mbox{ and }F_{12}(x,q)=1+\frac{xq}{1-x}.$$
\end{corollary}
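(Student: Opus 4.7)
The plan is to apply Theorem~\ref{mainthm} mechanically to each of the two patterns, using only the base case $F_1(x,q) \equiv 1$, and then solve the resulting linear equation for $F_\tau(x,q)$. Since both patterns have length~$2$, the normal-form decomposition is trivial and all but one or two summands in the functional equation will collapse.

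For $\tau = 21$, I would first read off the normal form: the only right-to-left minimum is $m_0 = 1$, with $\tau^{(0)} = 2$ and $r = 0$. In particular $\tau^{(0)} \neq \emptyset$, so the second case of Theorem~\ref{mainthm} applies. The indicators $\delta_{r\geq 1}$ vanish, the sum $\sum_{j=2}^{r}$ is empty, and the reduced form of $\tau^{(0)} = 2$ is $1$, so $F_{\tau^{(0)}}(x,q) = F_1(x,q) = 1$. The equation then reduces to $F_{21}(x,q) = 1 + xq + xq(F_{21}(x,q)-1) = 1 + xq F_{21}(x,q)$, from which $F_{21}(x,q) = 1/(1-xq)$ follows immediately.

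For $\tau = 12$, the normal form is $\tau^{(0)} = \emptyset$, $m_0 = 1$, $\tau^{(1)} = \emptyset$, $m_1 = 2$, giving $r = 1$; so the first case of Theorem~\ref{mainthm} applies. The relevant prefixes and suffixes are $\Theta^{(0)} = 1$, $\Theta^{(1)} = 12 = \tau$, and $\Theta^{<1>} = 1$ (the reduced form of the singleton $2$), hence $F_{\Theta^{(0)}} = F_{\Theta^{<1>}} = 1$. The term $xq(F_{\Theta^{<1>}}-1)$ and the single summand in $\sum_{j=1}^{r}$ both vanish, leaving $F_{12}(x,q) = 1 + xq + x(F_{12}(x,q)-1)$, which solves to $F_{12}(x,q) = (1-x+xq)/(1-x) = 1 + xq/(1-x)$.

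There is no real obstacle to this argument: once the normal forms of $12$ and $21$ are identified and the convention $F_1 \equiv 1$ is invoked, the whole proof is a two-line substitution into Theorem~\ref{mainthm} followed by solving a linear equation in $F_\tau$. The only bookkeeping step requiring any care is to verify, in the $\tau = 21$ case, that the reduced form of $\tau^{(0)} = 2$ is the length-one pattern~$1$, so that the recursion actually bottoms out at $F_1$.
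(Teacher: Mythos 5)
Your proposal is correct and follows essentially the same route as the paper: both substitute the normal forms of $21$ (namely $\tau^{(0)}m_0$ with $\tau^{(0)}=2$, $r=0$) and $12$ (namely $m_0m_1$ with $\tau^{(0)}=\tau^{(1)}=\emptyset$, $r=1$) into Theorem~\ref{mainthm}, use $F_1(x,q)=1$, and solve the resulting linear equations. Your identification of which case of the theorem applies and which terms vanish matches the paper's computation exactly.
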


Next we will consider the application of Theorem \ref{mainthm} to the patterns of length three, that is, $\tau\in S_3(312)$. In each case we will also use Corollary \ref{pat2s} and $F_1(x,q)=1$.

{\bf Pattern $\bf \tau=123$}. We have $\Theta^{(0)}=1$, $\Theta^{(1)}=12$, $\Theta^{(2)}=123$,  and $\Theta^{<0>}=123$, $\Theta^{<1>}=12$, $\Theta^{<2>}=1$. Thus,
$$F_{123}(x,q)=1+xq+x(F_{123}(x,q)-1)+xq(F_{12}(x,q)-1)+x(F_{12}(x,q)-1)(F_{12}(x,q)-1),$$
which gives
$F_{123}(x,q)=1+xq/(1-x)+\frac{x^2q^2}{(1-x)^3}$.

{\bf Pattern $\bf \tau=132$}. We have $\Theta^{(0)}=1$, $\Theta^{(1)}=132$, and $\Theta^{<0>}=132$, $\Theta^{<1>}=21$. Thus,
$$F_{132}(x,q)=1+xq+x(F_{132}(x,q)-1)+xq(F_{21}(x,q)-1)+x(F_{132}(x,q)-1)(F_{21}(x,q)-1),$$
which gives
$F_{132}(x,q)=\frac{1-x}{1-x-xq}$.

{\bf Pattern $\bf \tau=213$}. We have $\Theta^{(0)}=21$, $\Theta^{(1)}=213$, and $\Theta^{<0>}=213$, $\Theta^{<1>}=1$. Thus,
$$F_{213}(x,q)=1+xq+x(F_{213}(x,q)-1)+xq(F_{213}(x,q)-1),$$
which gives
$F_{213}(x,q)=\frac{1-x}{1-x-xq}$.

{\bf Pattern $\bf \tau=231$}. We have $\Theta^{(0)}=\Theta^{<0>}=231$. Thus,
$$F_{231}(x,q)=1+xq+x(F_{12}(x,q)-1)+xq(F_{231}(x,q)-1)+x(F_{12}(x,q)-1)(F_{231}(x,q)-1),$$
which gives
$F_{231}(x,q)=\frac{1-x}{1-x-xq}$.

{\bf Pattern $\bf \tau=321$}. We have $\Theta^{(0)}=\Theta^{<0>}=321$. Thus,
$$F_{321}(x,q)=1+xq+x(F_{21}(x,q)-1)+xq(F_{321}(x,q)-1)+x(F_{21}(x,q)-1)(F_{321}(x,q)-1),$$
which gives
$F_{321}(x,q)=\frac{1-xq}{(1-xq)^2-x^2q}$.

Hence, we can state the following result.
\begin{corollary}\label{pat3s} For $\tau\in S_3(312)$, the generating functions are given by
\begin{align*}
F_{123}(x,q)&=1+\frac{xq}{1-x}+\frac{x^2q^2}{(1-x)^3},\\
F_{132}(x,q)&=F_{213}(x,q)=F_{231}(x,q)=\frac{1-x}{1-x-xq},\\
F_{321}(x,q)&=\frac{1-xq}{(1-xq)^2-x^2q}.
\end{align*}
\end{corollary}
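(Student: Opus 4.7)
The plan is to apply Theorem \ref{mainthm} separately to each of the five patterns in $S_3(312)=\{123,132,213,231,321\}$, using Corollary \ref{pat2s} and the convention $F_1(x,q)=1$ to close each resulting functional equation.

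For each $\tau$, the first step is to write down its normal form $\tau=\tau^{(0)}m_0\tau^{(1)}m_1\cdots\tau^{(r)}m_r$, and to compute the prefixes $\Theta^{(j)}$ and reduced suffixes $\Theta^{<j>}$. Since $k=3$, the lists of $\Theta^{(j)}$ and $\Theta^{<j>}$ are short and each consists of words whose reduced form is $1$, $12$, $21$, or $\tau$ itself. Concretely: for $\tau=123$ the right-to-left minima are $1,2,3$ so $\tau^{(0)}=\emptyset$, $r=2$, and $\Theta^{(0)}=1,\ \Theta^{(1)}=12,\ \Theta^{(2)}=123$, with suffix reductions $\Theta^{<0>}=123,\ \Theta^{<1>}=12,\ \Theta^{<2>}=1$. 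The remaining cases $\tau=132,213,231,321$ each have a single right-to-left minimum coming either alone or preceded by one block, so $r=0$ or $r=1$ and the only nontrivial prefix/suffix reduced forms that appear are $1$, $12$, $21$, or $\tau$ itself.

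Next, for each $\tau$, I pick the appropriate branch of Theorem \ref{mainthm} according to whether $\tau^{(0)}=\emptyset$ (this happens for $\tau=123,132$) or not (for $\tau=213,231,321$). Substituting $F_1(x,q)=1$, $F_{12}(x,q)=\tfrac{1+xq-x}{1-x}$ and $F_{21}(x,q)=\tfrac{1}{1-xq}$ from Corollary \ref{pat2s} collapses each equation into a single linear equation in $F_\tau(x,q)$, which I then solve by elementary algebra and simplify. For $\tau=123$ the equation reduces to $F_{123}-1=xq+x(F_{123}-1)+xq(F_{12}-1)+x(F_{12}-1)^2$, from which $F_{123}=1+\tfrac{xq}{1-x}+\tfrac{x^2q^2}{(1-x)^3}$ after rearrangement. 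The patterns $\tau=132,213,231$ each produce (slightly different-looking) linear equations that all simplify to $(1-x-xq)F_\tau=1-x$, yielding the common value $\tfrac{1-x}{1-x-xq}$. For $\tau=321$ one gets $(1-xq-x(F_{21}-1))F_{321}=1-xq+x(F_{21}-1)$, which simplifies to $F_{321}=\tfrac{1-xq}{(1-xq)^2-x^2q}$.

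The main obstacle, such as it is, is purely bookkeeping: keeping track of the normal-form decomposition and of which $\Theta^{(j-1)},\Theta^{(j)},\Theta^{<j>}$ appear on the right-hand side, especially distinguishing the $\delta_{r=0}$ and $\delta_{r\geq 1}$ contributions in the $\tau^{(0)}\neq\emptyset$ branch. The mild surprise to verify is that the three superficially different functional equations for $\tau\in\{132,213,231\}$ all collapse to the same rational function; once Corollary \ref{pat2s} is substituted this is a short algebraic check and the equality is forced.
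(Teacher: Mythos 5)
Your proposal follows exactly the paper's route: apply Theorem \ref{mainthm} to each of the five patterns after computing normal forms, prefixes and suffixes, substitute Corollary \ref{pat2s} and $F_1(x,q)=1$, and solve the resulting linear equations; your case classification ($\tau^{(0)}=\emptyset$ for $123,132$; $\tau^{(0)}\neq\emptyset$ for $213,231,321$) and the equations for $123$ and for $132,213,231$ are all correct. The one slip is in your displayed equation for $\tau=321$: the functional equation $F_{321}=1+xq+x(F_{21}-1)+xq(F_{321}-1)+x(F_{21}-1)(F_{321}-1)$ rearranges to $\bigl(1-xq-x(F_{21}-1)\bigr)F_{321}=1$, not to right-hand side $1-xq+x(F_{21}-1)$; as written your equation would give $\frac{(1-xq)^2+x^2q}{(1-xq)^2-x^2q}$ rather than the correct $\frac{1-xq}{(1-xq)^2-x^2q}$, which you nonetheless state, so this is a transcription error rather than a flaw in the method.
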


The results in Corollary~\ref{pat3s} indeed extend the relevant results of Simion and Schmidt \cite{SmSc} for the permutations avoiding two patterns of length three. Here we find the generating functions for the number of permutations $\sigma$ in $S_n(312,\tau)$ with $\tau\in S_3(312)$ according to the length of the longest increasing subsequence in $\sigma$.

Our next result considers a specific type of pattern in which the last entry is $1$.
\begin{corollary}\label{cor2} Assume $\tau=\rho1\in S_k(312)$ and $k\geq2$. Then $F_\tau(x,q)=\frac{1}{1-xq-x(F_{\rho}(x,q)-1)}$.
Moreover,
$$\frac{\partial}{\partial q} F_{\tau}(x,q)\Bigr|_{\substack{q=1}}=xF^2_{\tau}(x,1)\left(1+\frac{\partial}{\partial q}F_{\rho}(x,q)\Bigr|_{\substack{q=1}}\right).$$
\end{corollary}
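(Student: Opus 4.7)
The plan is to identify the normal form of $\tau = \rho 1$ and then feed it directly into Theorem~\ref{mainthm}. Since $\tau$ ends in $1$ and every other entry exceeds $1$, the entry $1$ is the unique right-to-left minimum that appears in $\tau$ (any other right-to-left minimum would have to lie to the right of $1$). Hence in the notation of the theorem we have $r=0$, $m_0=1$, and $\tau^{(0)}=\rho$. Because $k\geq 2$, we are in the case $\tau^{(0)}\neq\emptyset$.

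With $r=0$ the only surviving $\delta$ is $\delta_{r=0}$, the sum $\sum_{j=2}^{r}$ is empty, and the term carrying $\delta_{r\geq 1}$ drops out. So the functional equation from Theorem~\ref{mainthm} collapses to
\begin{align*}
F_\tau(x,q) &= 1 + xq + x(F_\rho(x,q)-1) + xq(F_\tau(x,q)-1)\\
&\quad + x(F_\rho(x,q)-1)(F_\tau(x,q)-1).
\end{align*}
Expanding the right-hand side, most constant and $F_\rho$-only terms cancel, and after collecting the $F_\tau$-terms one finds $F_\tau(x,q) = 1 + xF_\tau(x,q)(q + F_\rho(x,q)-1)$. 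Solving this linear equation for $F_\tau(x,q)$ yields the stated closed form
$$F_\tau(x,q) = \frac{1}{1 - xq - x(F_\rho(x,q)-1)}.$$

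For the second assertion, I would simply differentiate the identity $F_\tau(x,q) = 1 + xF_\tau(x,q)\bigl(q + F_\rho(x,q) - 1\bigr)$ with respect to $q$, or equivalently differentiate the rational form above, obtaining
$$\frac{\partial}{\partial q}F_\tau(x,q) = x\,F_\tau(x,q)^2\left(1 + \frac{\partial}{\partial q}F_\rho(x,q)\right),$$
and then set $q=1$ to conclude. The argument is essentially bookkeeping: the only step that requires any care is the cancellation when specializing Theorem~\ref{mainthm} to this pattern, so I would write that simplification out in full while leaving the derivative step as a direct computation.
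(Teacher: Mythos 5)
Your proposal is correct and follows essentially the same route as the paper: specialize Theorem~\ref{mainthm} to the case $r=0$, $m_0=1$, $\tau^{(0)}=\rho$, solve the resulting linear functional equation for $F_\tau(x,q)$, and differentiate in $q$ to get the second identity. The algebraic simplification to $F_\tau(x,q)=1+xF_\tau(x,q)(q+F_\rho(x,q)-1)$ and the derivative computation both check out.
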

\begin{proof}
By Theorem \ref{mainthm} with $\tau=\rho1$ ($r=0$, $m_0=1$ and $\tau^{(0)}=\rho$), we have
$$F_{\tau}(x,q)=1+xq+x(F_{\rho}(x,q)-1)+xq(F_{\tau}(x,q)-1)+x(F_{\rho}(x,q)-1)(F_{\tau}(x,q)-1),$$
which implies
$$F_{\tau}(x,q)=\frac{1}{1-xq-x(F_{\rho}(x,q)-1)}.$$
In particular, $F_{\tau}(x,1)=\frac{1}{1-xF_{\rho(x,1)}}$, as shown in \cite{MV1}. Moreover, by differentiating $F_{\tau}(x,q)$ with respect to $q$ and evaluating at $q=1$, we obtain
$$\frac{\partial}{\partial q} F_{\tau}(x,q)\Bigr|_{\substack{q=1}}=\frac{x\left(1+\frac{\partial}{\partial q} F_{\rho}(x,q)\Bigr|_{\substack{q=1}}\right)}{(1-xF_{\rho}(x,1))^2}
=xF_\tau^2(x,1)\left(1+\frac{\partial}{\partial q} F_{\rho}(x,q)\Bigr|_{\substack{q=1}}\right),$$
which completes the proof.
\end{proof}

 By Corollary \ref{pat2s} and \ref{pat3s}, we recover the relevant results in \cite{MY}.
\begin{theorem}\label{pat3sEE} For all $n\geq1$, we have
\begin{align*}
\E^{123}(L_n)&=\frac{2(n^2-n+1)}{n^2-n+2},&&\E^{123}(L_n^2)=\frac{2(2n^2-2n+1)}{n^2-n+2},\\
\E^{132}(L_n)&=\E^{213}(L_n)=\E^{231}(L_n)=\frac{n+1}{2},&&\E^{132}(L_n^2)=\E^{213}(L_n^2)=\E^{231}(L_n^2)=\frac{n(n+3)}{4},\\
\E^{321}(L_n)&=\frac{3n}{4},&&\E^{321}(L_n^2)=\frac{n(9n+1)}{16}.
\end{align*}
\end{theorem}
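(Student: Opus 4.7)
The plan is to apply directly the moment formulas from Section~\ref{general}, namely
$$\E^{\tau}(L_n)=\frac{1}{s_n}[x^n]\frac{\partial}{\partial q}F_\tau(x,q)\Bigr|_{q=1}, \qquad \E^{\tau}(L_n^2)=\frac{1}{s_n}[x^n]\left(\frac{\partial^2}{\partial q^2}F_\tau(x,q)\Bigr|_{q=1}+\frac{\partial}{\partial q}F_\tau(x,q)\Bigr|_{q=1}\right),$$
using the closed forms of $F_\tau(x,q)$ supplied by Corollary~\ref{pat3s}, and extracting $[x^n]$ by a simple power series expansion. First I would compute the enumerator $s_n=[x^n]F_\tau(x,1)$ for each pattern. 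For $\tau=123$ one gets $F_{123}(x,1)=1+\frac{x}{1-x}+\frac{x^2}{(1-x)^3}$, so $s_n=1+\binom{n}{2}=(n^2-n+2)/2$ for $n\ge 2$. For each of $\tau\in\{132,213,231\}$ the rational function $F_\tau(x,1)=\frac{1-x}{1-2x}$ gives $s_n=2^{n-1}$, and for $\tau=321$ the identity $(1-x)^2-x^2=1-2x$ again yields $s_n=2^{n-1}$.

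Next I would differentiate each generating function. For $\tau=123$ this is immediate: $\partial_q F_{123}|_{q=1}=\frac{x}{1-x}+\frac{2x^2}{(1-x)^3}$ and $\partial_q^2 F_{123}|_{q=1}=\frac{2x^2}{(1-x)^3}$, whose coefficients of $x^n$ are $1+n(n-1)$ and $n(n-1)$, so dividing by $s_n$ produces the stated means and second moments after one line of algebra. For $\tau\in\{132,213,231\}$ the function $F_\tau(x,q)=\frac{1-x}{1-x-xq}$ yields $\partial_q F|_{q=1}=\frac{x(1-x)}{(1-2x)^2}$ and $\partial_q^2 F|_{q=1}=\frac{2x^2(1-x)}{(1-2x)^3}$; expanding $(1-2x)^{-k}=\sum_j \binom{j+k-1}{k-1}2^j x^j$ and collecting terms produces coefficients proportional to $2^{n-2}(n+1)$ and $2^{n-3}n(n+3)$, respectively, and then dividing by $s_n=2^{n-1}$ gives the claimed formulas. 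The three patterns give the same answer because they share a common generating function.

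For $\tau=321$ the computation is slightly longer because $F_{321}(x,q)=\frac{1-xq}{(1-xq)^2-x^2q}$ is rational in both variables. Writing $u=1-xq$, $v=u^2-x^2q$ and computing $\partial_q F=(u_q v - u v_q)/v^2$ with $u_q=-x$, $v_q=-2xu-x^2$ leads, after substituting $q=1$ and simplifying using $u|_{q=1}=1-x$, $v|_{q=1}=1-2x$, to $\partial_q F_{321}|_{q=1}=\frac{x(1-x+x^2)}{(1-2x)^2}$. An analogous but somewhat longer differentiation, together with the identity $(1-x)^2-x^2=1-2x$, reduces $\partial_q^2 F_{321}|_{q=1}$ to a proper rational function with denominator $(1-2x)^3$. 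Extracting $[x^n]$ and dividing by $s_n=2^{n-1}$ yields $\E^{321}(L_n)=3n/4$ and the stated second moment.

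The conceptual core of the proof is just the application of Corollary~\ref{pat3s} plus the moment formulas; there is no structural obstacle. The only place careful bookkeeping is required is the second derivative for the pattern $321$, where the dependence of both numerator and denominator of $F_{321}(x,q)$ on $q$ forces us to track several terms before collapsing them using $(1-x)^2-x^2=1-2x$. Everything else reduces to expansions of $(1-x)^{-k}$ or $(1-2x)^{-k}$.
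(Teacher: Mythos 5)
Your proposal is correct and follows exactly the route the paper takes: the paper offers no separate argument for this theorem beyond citing Corollaries~\ref{pat2s} and~\ref{pat3s}, i.e.\ differentiating the explicit generating functions at $q=1$ and extracting coefficients via the stated moment formulas. Your intermediate quantities check out (e.g.\ $\partial_q F_{321}|_{q=1}=\frac{x(1-x+x^2)}{(1-2x)^2}$ and $[x^n](\partial_q^2F+\partial_qF)|_{q=1}=2^{n-3}n(n+3)$ for $\tau\in\{132,213,231\}$), so nothing further is needed.
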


\section{Special cases of longer patterns}\label{secspecial}
The main result of this paper, Theorem~\ref{mainthm},  can be used to obtain general results for several longer patterns. In this subsection, as an example, we apply it to the following three specific patterns $12\cdots m$, $m(m-1)\cdots21$ and $(m-1)m(m-2)\cdots21$.

Recall that the Chebyshev polynomials of the second kind are defined by $U_j(\cos\theta)= \frac{\sin((j +1)\theta)}{\sin\theta}$. It is well known that these polynomials satisfy
\begin{align}\label{eqch}
U_0(t)=1, U_1(t)=2t,\mbox{ and }U_m(t)=2tU_{m-1}(t)-U_{m-2}(t)\mbox{ for all integers $m$},
\end{align}
and
\begin{align}\label{eqchf}
U_n(t)=2^n\prod_{j=1}^n\left(t-\cos\left(\frac{j\pi}{n+1}\right)\right).
\end{align}

\subsection{Monotone increasing pattern $\tau=12\cdots m$}
In this subsection, we study the pattern $\tau=12\cdots m$.
By Corollaries \ref{pat2s} and \ref{pat3s}, we see that $F_{1}(x,q)=1$, $F_{12}(x,q)=1+\frac{xq}{1-x}$ and $F_{123}(x,q)=1+\frac{xq}{1-x}+\frac{x^2q^2}{(1-x)^3}$.
By Theorem \ref{mainthm} with $\tau=12\cdots m$, we have
\begin{align*}
F_{12\cdots m}(x,q)&=1+xq+x(F_{12\cdots m}(x,q)-1)+xq(F_{12\cdots(m-1)}(x,q)-1)\\
&+x\sum_{j=2}^{m}(F_{12\cdots j}(x,q)-F_{12\cdots(j-1)}(x,q))(F_{12\cdots(m-j+1)}(x,q)-1).
\end{align*}
which is equivalent to
\begin{align*}
F_{12\cdots m}(x,q)&=1+xqF_{12\cdots(m-1)}(x,q)\\
&+x\sum_{j=2}^{m}(F_{12\cdots j}(x,q)-F_{12\cdots(j-1)}(x,q))F_{12\cdots(m-j+1)}(x,q).
\end{align*}
Define $G(x,q,v)=\sum_{m\geq1}F_{12\cdots m}(x,q)v^m$. Then, by multiplying the above recurrence by $v^m$ and summing over $m$, we obtain
\begin{align*}
\sum_{m\geq1}F_{12\cdots m}(x,q)v^m&=\sum_{m\geq1}v^m+xqv\sum_{m\geq1}v^{m-1}F_{12\cdots(m-1)}(x,q)\\
&+x\sum_{m\geq2}v^m\sum_{j=2}^{m}(F_{12\cdots j}(x,q)-F_{12\cdots(j-1)}(x,q))F_{12\cdots(m-j+1)}(x,q),
\end{align*}
which implies
\begin{align*}
G(x,q,v)&=\frac{v}{1-v}+xqvG(x,q,v)+\frac{x}{v}(G(x,q,v)-v)G(x,q,v)-x(G(x,q,v))^2.
\end{align*}
Thus, $G(x,q,v)$ satisfies
\begin{align*}
\frac{v}{1-v}+(1+x-qxv)G(x,q,v)-\frac{x(1-v)}{v}G^2(x,q,v)=0.
\end{align*}
By solving the above equation for $G(x,q,v)$ we obtain
$$G(x,q,v)=\frac{(1+x-qxv-\sqrt{(1+x-qxv)^2-4x})v}{2x(1-v)}.$$
Then
\begin{align*}
\frac{G(x,q,\frac{v(1-x)^2}{qx})\frac{1-\frac{v(1-x)^2}{qx}}{\frac{v(1-x)^2}{qx}}-1}{(1-x)v}&=\frac{1-v(1-x)-\sqrt{1-2v(1+x)+v^2(1-x)^2}}{2xv(1-v)},
\end{align*}
which, by definition of Narayana numbers (see Sequence A001263 in \cite{Slo}),  leads to
\begin{align*}
\frac{G(x,q,\frac{v(1-x)^2}{qx})\frac{1-\frac{v(1-x)^2}{qx}}{\frac{v(1-x)^2}{qx}}-1}{(1-x)v}
=1+\sum_{n\geq1}\sum_{k=1}^n\frac{1}{n}\binom{n}{k}\binom{n}{k-1}x^{k-1}v^n.
\end{align*}
Therefore, by replacing $v$ by $xqv/(1-x)^2$, we have
\begin{align*}
\frac{G(x,q,v)\frac{1-v}{v}-1}{\frac{xqv}{1-x}}
=1+\sum_{n\geq1}\sum_{k=1}^n\frac{1}{n}\binom{n}{k}\binom{n}{k-1}\frac{x^{n+k-1}}{(1-x)^{2n}}q^nv^n.
\end{align*}
which implies
\begin{align*}
G(x,q,v)=\frac{v}{1-v}\left(1+\frac{qxv}{1-x}+\sum_{n\geq1}\sum_{k=1}^n\frac{1}{n}\binom{n}{k}\binom{n}{k-1}\frac{q^{n+1}x^{n+k}v^{n+1}}{(1-x)^{2n+1}}\right).
\end{align*}
By finding the coefficient of $v^m$, we obtain the following result.
\begin{corollary}\label{thinc}
For all $m\geq1$,
$$F_{12\cdots m}(x,q)=1+\frac{qx}{1-x}+\sum_{j=2}^{m-1}\left(\frac{q^jx^j}{(1-x)^{2j-1}}
\sum_{k=1}^{j-1}\frac{1}{j-1}\binom{j-1}{k}\binom{j-1}{k-1}x^{k-1}\right).$$
\end{corollary}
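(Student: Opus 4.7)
The plan is straightforward coefficient extraction from the closed-form expression for $G(x,q,v)$ that has just been obtained. Since $G(x,q,v)=\sum_{m\geq1}F_{12\cdots m}(x,q)\,v^m$ by definition, Corollary~\ref{thinc} is exactly the statement $F_{12\cdots m}(x,q)=[v^m]G(x,q,v)$, and the preceding derivation already writes $G$ as $\frac{v}{1-v}$ times an explicit power series in $v$ whose coefficients involve the Narayana numbers. So the proof reduces to unwinding a convolution.

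First I would factor the last displayed formula for $G$ as $G(x,q,v)=\frac{v}{1-v}\,H(x,q,v)$, where
\[H(x,q,v)=1+\frac{qxv}{1-x}+\sum_{n\geq1}\sum_{k=1}^{n}\frac{1}{n}\binom{n}{k}\binom{n}{k-1}\frac{q^{n+1}x^{n+k}}{(1-x)^{2n+1}}\,v^{n+1}.\]
Expanding $\frac{v}{1-v}=\sum_{\ell\geq1}v^{\ell}$ and convolving then yields $[v^m]G(x,q,v)=\sum_{i=0}^{m-1}[v^{i}]H(x,q,v)$ for every $m\geq1$.

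Next I would read off the coefficients of $H$ directly: one has $[v^0]H=1$ and $[v^1]H=\frac{qx}{1-x}$, which supply the two initial terms of the claimed formula. For $i\geq2$, the reindexing $j=i$, $n=j-1$ inside the double sum gives
\[[v^j]H=\frac{q^jx^j}{(1-x)^{2j-1}}\sum_{k=1}^{j-1}\frac{1}{j-1}\binom{j-1}{k}\binom{j-1}{k-1}x^{k-1},\]
which is precisely the summand displayed in the corollary. Summing over $j=2,\dots,m-1$ recovers the stated expression for $F_{12\cdots m}(x,q)$.

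I do not foresee any real obstacle: the combinatorial and analytic content of the result — deriving the functional equation for $G$, solving the quadratic, and recognising the Narayana generating function after the substitution $v\mapsto xqv/(1-x)^2$ — has already been carried out in the material preceding the corollary. The proof of Corollary~\ref{thinc} itself is just the bookkeeping that assembles the partial sums $\sum_{i=0}^{m-1}[v^{i}]H$ into the form claimed, together with the trivial re-indexing $n\mapsto j-1$.
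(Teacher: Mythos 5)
Your proposal is correct and is essentially the paper's own proof: the paper also obtains the corollary by extracting $[v^m]G(x,q,v)$ from the factored form $G=\frac{v}{1-v}H(x,q,v)$, i.e.\ by the convolution $[v^m]G=\sum_{i=0}^{m-1}[v^i]H$ with the reindexing $n=j-1$; you have merely written out the bookkeeping the paper leaves implicit. (The only caveat, shared with the paper's statement, is that the displayed formula should be read with the $\frac{qx}{1-x}$ term absent when $m=1$, since $[v^1]G=[v^0]H=1=F_1(x,q)$.)
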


By Corollary \ref{thinc}, we see that the generating function $F_{12\cdots m}(x,1)$ has a pole at $x=1$ of order $2m-3$. Thus,
$$[x^n]F_{12\cdots m}(x,1)\sim \frac{n^{2m-4}}{(2m-4)!(m-2)}
\sum_{k=1}^{m-2}\binom{m-2}{k}\binom{m-2}{k-1},$$
which, by definition of Narayana numbers, implies that
$$[x^n]F_{12\cdots m}(x,1)\sim \frac{n^{2m-4}}{(2m-4)!}c_{m-2},$$
where $c_n=\frac{1}{n+1}\binom{2n}{n}$ is the $n^{th}$ Catalan number.

Also, by Corollary \ref{thinc}, we see that the generating function $\frac{\partial}{\partial q}F_{12\cdots m}(x,q)\mid_{q=1}$ has a pole at $x=1$ of order $2m-3$. Thus,
$$[x^n]\frac{\partial}{\partial q}F_{12\cdots m}(x,q)\mid_{q=1}\sim \frac{(m-1)n^{2m-4}}{(2m-4)!(m-2)}
\sum_{k=1}^{m-2}\binom{m-2}{k}\binom{m-2}{k-1}=\frac{(m-1)n^{2m-4}}{(2m-4)!}c_{m-2}.$$
Hence, we can state the following result.
\begin{theorem}\label{thmm21}
Let $m\geq1$. When $n\rightarrow\infty$, we have
\begin{align*}
\E^{12\cdots m}(L_n)&\sim m-1.
\end{align*}
\end{theorem}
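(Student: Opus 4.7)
The plan is to derive the asymptotic simply by dividing the two asymptotic estimates that have already been essentially established in the paragraph preceding the theorem statement. Recall from Section \ref{general} that
\[
\E^{12\cdots m}(L_n)=\frac{1}{s_n}[x^n]\frac{\partial}{\partial q}F_{12\cdots m}(x,q)\Big|_{q=1},\qquad s_n=[x^n]F_{12\cdots m}(x,1),
\]
so the task reduces to extracting the dominant large-$n$ behaviour of the numerator and denominator and taking their ratio.

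The first step is to inspect Corollary \ref{thinc} and observe that $F_{12\cdots m}(x,q)$ is a polynomial in $q$ of degree $m-1$ in which the summand indexed by $j$ has the form $\frac{q^j x^j P_j(x)}{(1-x)^{2j-1}}$ for a polynomial $P_j(x)$. Hence the highest-order pole of $F_{12\cdots m}(x,1)$ at $x=1$ has order $2m-3$ and is contributed exclusively by the $j=m-1$ term; the same is true for $\frac{\partial}{\partial q}F_{12\cdots m}(x,q)|_{q=1}$, whose $j$-th summand differs from that of $F_{12\cdots m}(x,1)$ only by a factor of $j$.

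The second step is singularity analysis. Using the Narayana identity $\sum_{k=1}^{m-2}\frac{1}{m-2}\binom{m-2}{k}\binom{m-2}{k-1}=c_{m-2}$ together with $[x^n]\frac{1}{(1-x)^{2m-3}}\sim\frac{n^{2m-4}}{(2m-4)!}$, the $j=m-1$ contribution gives
\[
[x^n]F_{12\cdots m}(x,1)\sim \frac{c_{m-2}\, n^{2m-4}}{(2m-4)!},\qquad [x^n]\frac{\partial}{\partial q}F_{12\cdots m}(x,q)\Big|_{q=1}\sim \frac{(m-1)c_{m-2}\, n^{2m-4}}{(2m-4)!},
\]
which is exactly what the text immediately above the theorem records. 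The summands with $j<m-1$ contribute only $O(n^{2m-6})$ and are therefore absorbed in the $\sim$ symbol. Taking the ratio cancels the common factor $c_{m-2}n^{2m-4}/(2m-4)!$ and yields $\E^{12\cdots m}(L_n)\sim m-1$.

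The main (very mild) obstacle is purely bookkeeping: one must confirm that no other summand in Corollary \ref{thinc} contributes to the leading asymptotic of either the numerator or denominator, and that the Narayana sum really collapses to the Catalan number $c_{m-2}$. Both points are explicit in the discussion preceding the theorem, so the proof is essentially a one-line ratio of the two displayed asymptotics.
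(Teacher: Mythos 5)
Your proposal is correct and follows essentially the same route as the paper: it reads off the order-$(2m-3)$ pole at $x=1$ from Corollary \ref{thinc}, uses the Narayana--Catalan identity to get $[x^n]F_{12\cdots m}(x,1)\sim \frac{c_{m-2}n^{2m-4}}{(2m-4)!}$ and the extra factor $m-1$ from differentiating $q^{m-1}$, and divides. The only (shared) caveat is that the argument as written really requires $m\geq3$; the cases $m=1,2$ need to be checked directly.
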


\subsection{Monotone decreasing pattern $\tau=m(m-1)\cdots21$}
In this subsection, we study the pattern $\mm=m(m-1)\cdots21$.
\begin{corollary}\label{cor1}Let $\mm=m(m-1)\cdots21$. Then
$$F_{\mm}(x,q)=\frac{U_{m-2}(t)-\sqrt xU_{m-3}(t)}{\sqrt x(U_{m-1}(t)-\sqrt xU_{m-2}(t))},$$
where $t=\frac{1+x-xq}{2\sqrt x}$.
\end{corollary}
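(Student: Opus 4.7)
The plan is to derive a first-order functional recurrence for $F_{\mm}(x,q)$ via Corollary~\ref{cor2} and then verify by induction that the proposed Chebyshev ratio solves it. Since $\mm=m(m-1)\cdots21$ is strictly decreasing, its only right-to-left minimum is the last entry $1$, so in the notation of Theorem~\ref{mainthm} one has $r=0$, $m_0=1$, and $\tau^{(0)}=m(m-1)\cdots2$, whose reduced form is the shorter decreasing pattern $\rho=(m-1)(m-2)\cdots21$. Applying Corollary~\ref{cor2} to this $\rho$ gives
\begin{equation*}
F_{\mm}(x,q)=\frac{1}{1-xq-x(F_{\rho}(x,q)-1)}=\frac{1}{2t\sqrt{x}-xF_{\rho}(x,q)},
\end{equation*}
where in the second equality I have used $2t\sqrt{x}=1+x-xq$.

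To match this with the claimed closed form, I introduce the auxiliary polynomials $P_j(t):=U_j(t)-\sqrt{x}\,U_{j-1}(t)$, so that the statement of the corollary reads $F_{\mm}(x,q)=P_{m-2}(t)/(\sqrt{x}\,P_{m-1}(t))$. The essential algebraic observation is that $P_j$ inherits the Chebyshev three-term recursion \eqref{eqch}: a direct expansion gives
\begin{equation*}
2tP_{m-2}(t)-P_{m-3}(t)=\bigl(2tU_{m-2}(t)-U_{m-3}(t)\bigr)-\sqrt{x}\bigl(2tU_{m-3}(t)-U_{m-4}(t)\bigr)=P_{m-1}(t).
\end{equation*}

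Armed with this identity, the induction on $m$ is immediate. For the base case $m=2$, using $U_{-1}=0$ (which follows from \eqref{eqch}) one computes $P_0/(\sqrt{x}\,P_1)=1/(\sqrt{x}(2t-\sqrt{x}))=1/(2t\sqrt{x}-x)=1/(1-xq)$, in agreement with Corollary~\ref{pat2s}. For the inductive step, assuming $F_{\rho}=P_{m-3}/(\sqrt{x}\,P_{m-2})$ and substituting into the recurrence yields
\begin{equation*}
F_{\mm}=\frac{1}{2t\sqrt{x}-\sqrt{x}\,P_{m-3}/P_{m-2}}=\frac{P_{m-2}}{\sqrt{x}(2tP_{m-2}-P_{m-3})}=\frac{P_{m-2}}{\sqrt{x}\,P_{m-1}},
\end{equation*}
as required.

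I do not anticipate any serious obstacle. The only mild subtlety is bookkeeping the $\sqrt{x}$ factors, which is why I package the combination $U_j-\sqrt{x}\,U_{j-1}$ as $P_j$ from the outset; once that is done, the argument reduces to a one-line substitution combined with the Chebyshev recurrence.
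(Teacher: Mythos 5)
Your proposal is correct and follows essentially the same route as the paper: both reduce $\mm=\rho 1$ via Corollary~\ref{cor2} to the recurrence $F_{\mm}=1/(1+x-xq-xF_{\rho})$ and then run an induction on $m$ powered by the three-term Chebyshev recursion \eqref{eqch}, with the base case checked against Corollary~\ref{pat2s}. Your packaging of $P_j=U_j-\sqrt{x}\,U_{j-1}$ is only a notational streamlining of the same computation.
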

\begin{proof}
The proof is given by induction on $m$. Clearly, $F_1(x,q)=1$ and $F_{21}(x,q)=\frac{1}{1-xq}$, so the claim holds for $m=1,2$.
Assume that the claim holds for $1,2,\cdots, m$ and let's us prove it for $m+1$.  Since $\mm+{\bf1}=(\mm +1)1$, then Corollary \ref{cor2} gives $F_{\mm+{\bf1}}(x,q)=\frac{1}{1-xq-x(F_{\mm}(x,q)-1)}$. Thus by induction assumption, we obtain
\begin{align*}
F_{\mm+{\bf1}}(x,q)&=\frac{1}{1-xq-x(F_{\mm}(x,q)-1)}\\
&=\frac{\sqrt x(U_{m-1}(t)-\sqrt xU_{m-2}(t))}{x(2tU_{m-1}(t)-U_{m-2}(t))-x\sqrt x(2tU_{m-2}(t)-U_{m-3}(t))}\\
&=\frac{\sqrt x(U_{m-1}(t)-\sqrt xU_{m-2}(t))}{x(U_m(t)-\sqrt xU_{m-1}(t))}\\
&=\frac{U_{m-1}(t)-\sqrt xU_{m-2}(t)}{\sqrt x(U_m(t)-\sqrt xU_{m-1}(t))}
\end{align*}
where we used the fact \eqref{eqch} and $2t\sqrt x=1+x-xq$.
\end{proof}

By Corollary \ref{cor1} with $q=1$ and \eqref{eqch}, we have
$F_{\mm}(x,1)=\frac{U_{m-1}(\frac{1}{2\sqrt{x}})}{\sqrt{x}U_{m}(\frac{1}{2\sqrt{x}})}$, as
shown in  \cite{MV1}.
Moreover, by Corollary \ref{cor2}, we have
$$\frac{\partial}{\partial q} F_{\mm}(x,q)\Bigr|_{\substack{q=1}}=xF^2_{\mm}(x,1)\left(1+\frac{\partial}{\partial q}F_{{\bf m-1}}(x,q)\Bigr|_{\substack{q=1}}\right)$$
with $\frac{\partial}{\partial q} F_{{\bf1}}(x,q)\Bigr|_{\substack{q=1}}=0$.
Thus, by induction on $m$, we can state the following result.

\begin{corollary}\label{cor1a}Let $\mm=m(m-1)\cdots21$. Then
$$\frac{\partial}{\partial q} F_{\mm}(x,q)\Bigr|_{\substack{q=1}}=\frac{1}{U_m^2(\frac{1}{2\sqrt{x}})}\sum_{j=1}^{m-1}U_j^2(\frac{1}{2\sqrt{x}}).$$
\end{corollary}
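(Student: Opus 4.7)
The plan is to prove this by induction on $m$, using the recurrence of Corollary~\ref{cor2} together with the closed form for $F_{\mm}(x,1)$ noted immediately after Corollary~\ref{cor1}. For brevity, write $U_j := U_j(\tfrac{1}{2\sqrt{x}})$.

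The base case $m=1$ is immediate: $F_{\bf 1}(x,q)=1$, so $\tfrac{\partial}{\partial q}F_{\bf 1}(x,q)\bigr|_{q=1}=0$, while the right-hand side is an empty sum, also zero. (One may also verify $m=2$ as a sanity check: $F_{21}(x,q)=1/(1-xq)$ yields $x/(1-x)^2$ at $q=1$, matching $U_1^2/U_2^2$ after substituting $U_1=1/\sqrt{x}$, $U_2=(1-x)/x$.)

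For the inductive step, assume the formula for $m-1$. By Corollary~\ref{cor2} applied to $\tau=\mm=(\mm-{\bf 1})\,1$,
\begin{equation*}
\frac{\partial}{\partial q} F_{\mm}(x,q)\Bigr|_{q=1}
=xF_{\mm}^2(x,1)\left(1+\frac{\partial}{\partial q}F_{{\bf m-1}}(x,q)\Bigr|_{q=1}\right).
\end{equation*}
From the identity $F_{\mm}(x,1)=U_{m-1}/(\sqrt{x}\,U_m)$, we have $xF_{\mm}^2(x,1)=U_{m-1}^2/U_m^2$. The induction hypothesis gives
\begin{equation*}
1+\frac{\partial}{\partial q}F_{{\bf m-1}}(x,q)\Bigr|_{q=1}
=1+\frac{1}{U_{m-1}^2}\sum_{j=1}^{m-2}U_j^2
=\frac{1}{U_{m-1}^2}\sum_{j=1}^{m-1}U_j^2.
\end{equation*}
Multiplying these two expressions, the factor $U_{m-1}^2$ cancels and yields exactly $\frac{1}{U_m^2}\sum_{j=1}^{m-1}U_j^2$, completing the induction.

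The calculation is essentially mechanical; the only thing to be careful about is the cancellation of $U_{m-1}^2$ between the closed form $xF_{\mm}^2(x,1)=U_{m-1}^2/U_m^2$ and the denominator coming from the inductive hypothesis. No Chebyshev recurrence is needed here (it was already absorbed into the formula of Corollary~\ref{cor1}), so unlike the proof of Corollary~\ref{cor1}, one does not need to invoke \eqref{eqch} again.
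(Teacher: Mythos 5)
Your proof is correct and follows exactly the route the paper intends: induction on $m$ via the derivative recurrence of Corollary~\ref{cor2} together with the closed form $F_{\mm}(x,1)=U_{m-1}/(\sqrt{x}\,U_m)$, with the telescoping cancellation of $U_{m-1}^2$. The paper only sketches this argument, so your write-up simply fills in the same computation in full detail.
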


Since the smallest
pole of $1/U_n(x)$ is $\cos\left(\frac{\pi}{n+1}\right)$, it
follows, by Corollary \ref{cor1a}, that the coefficient of $x^n$ in the generating function
$\frac{\partial}{\partial q} F_{\mm}(x,q)\Bigr|_{\substack{q=1}}$
is given by,
\begin{align*}
[x^n]\frac{\partial}{\partial
q}F_{\mm}(x,q)\Bigr|_{\substack{q=1}}\sim\alpha_m
n\left(4\cos^2\left(\frac{\pi}{m+1}\right)\right)^n \hbox{ as } n\rightarrow\infty.
\end{align*}
Let $v_0=\frac{1}{4\cos^2\left(\frac{\pi}{m+1}\right)}$. The constant $\alpha_m$ can be computed explicitly as
\begin{align*}
\alpha_m&=\lim_{x\rightarrow v_0}
\frac{\left(1-4x\cos^2\left(\frac{\pi}{m+1}\right)\right)^2}{U_m^2(\frac{1}{2\sqrt{x}})}\sum_{j=1}^{m-1}U_j^2(\frac{1}{2\sqrt{x}})\\
&=\frac{\sum_{j=1}^{m-1}U_j^2\left(\cos\left(\frac{\pi}{m+1}\right)\right)}
{\left(4\cos^2\left(\frac{\pi}{m+1}\right)\right)^m\prod_{j=2}^{m-1}\left(1-\frac{\cos\left(\frac{j\pi}{m+1}\right)}{\cos\left(\frac{\pi}{m+1}\right)}\right)^2}\\
&=\frac{\sum_{j=1}^{m-1}U_j^2\left(\cos\left(\frac{\pi}{m+1}\right)\right)}
{4^m\cos^4\left(\frac{\pi}{m+1}\right)\prod_{j=2}^{m-1}\left(\cos\left(\frac{\pi}{m+1}\right)-\cos\left(\frac{j\pi}{m+1}\right)\right)^2}.
\end{align*}
Moreover, the coefficient of $x^n$ in
the generating function
$F_{\mm}(x,1)=\frac{U_{m-1}(\frac{1}{2\sqrt{x}})}{\sqrt{x}U_m(\frac{1}{2\sqrt{x}})}$
is given by
\begin{align}
[x^n]F_{\mm}(x,1)\sim\tilde \alpha_m
\left(4\cos^2\left(\frac{\pi}{m+1}\right)\right)^n \hbox{ as }
n\rightarrow\infty,\label{eqmma1}
\end{align}
where
\begin{align*}
\tilde \alpha_m&=\lim_{x\rightarrow v_0}
\frac{\left(1-4x\cos^2\left(\frac{\pi}{m+1}\right)\right)U_{m-1}(\frac{1}{2\sqrt{x}})}{\sqrt{x}U_m(\frac{1}{2\sqrt{x}})}\\
&=\frac{U_{m-1}(\cos(\frac{\pi}{m+1}))}{2^{m-1}\cos^{m-1}(\frac{\pi}{m+1})
\prod_{j=2}^{m-1}\left(1-\frac{\cos\left(\frac{j\pi}{m+1}\right)}{\cos\left(\frac{\pi}{m+1}\right)}\right)}\\
&=\frac{U_{m-1}(\cos(\frac{\pi}{m+1}))}
{2^{m-1}\cos\left(\frac{\pi}{m+1}\right)\prod_{j=2}^{m-1}\left(\cos\left(\frac{\pi}{m+1}\right)-\cos\left(\frac{j\pi}{m+1}\right)\right)}.
\end{align*}
Thus we have
$\E^{\mm}(L_n)\sim \frac{\alpha_m}{\tilde \alpha_m}n$.
By substituting expressions of $\alpha_m$ and $\tilde \alpha_m$,
it leads to the following result.
\begin{theorem}\label{thmm21}
Let $m\geq1$. When $n\rightarrow\infty$, we have
\begin{align*}
\E^{\mm}(L_n)&\sim \frac{\sum_{j=1}^{m-1}U_j^2\left(\cos\left(\frac{\pi}{m+1}\right)\right)}
{2^{m+1}\cos^3\left(\frac{\pi}{m+1}\right)U_{m-1}(\cos(\frac{\pi}{m+1}))\prod_{j=2}^{m-1}\left(\cos\left(\frac{\pi}{m+1}\right)-\cos\left(\frac{j\pi}{m+1}\right)\right)}n.
\end{align*}
\end{theorem}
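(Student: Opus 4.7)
The plan is to compute $\E^{\mm}(L_n)$ via the standard coefficient-ratio formula and then extract the leading asymptotic by meromorphic singularity analysis on the two generating functions already obtained in Corollaries~\ref{cor1} and \ref{cor1a}. Specifically, I would begin from
$$\E^{\mm}(L_n) = \frac{[x^n]\,\frac{\partial}{\partial q}F_{\mm}(x,q)\bigr|_{q=1}}{[x^n]\,F_{\mm}(x,1)},$$
substitute the closed forms $F_{\mm}(x,1) = U_{m-1}(1/(2\sqrt{x}))/(\sqrt{x}\,U_m(1/(2\sqrt{x})))$ and $\frac{\partial}{\partial q}F_{\mm}(x,q)\bigr|_{q=1} = \sum_{j=1}^{m-1}U_j^2(1/(2\sqrt{x}))/U_m^2(1/(2\sqrt{x}))$, and note that both are rational in $x$ (after clearing $\sqrt{x}$ via the parity of the polynomials involved) so that the standard transfer principle applies without complication.

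Next I would locate the dominant singularity. Using \eqref{eqchf} together with the substitution $t = 1/(2\sqrt{x})$, the zeros of $U_m(1/(2\sqrt{x}))$ correspond to $x_j = 1/(4\cos^2(j\pi/(m+1)))$ for $j = 1,\ldots,m$, and the smallest positive one is $v_0 = 1/(4\cos^2(\pi/(m+1)))$. The decisive structural observation is that $U_m$ enters the denominator of $F_{\mm}(x,1)$ once but enters the denominator of $\frac{\partial}{\partial q}F_{\mm}(x,q)\bigr|_{q=1}$ squared; hence the former has a simple pole at $v_0$ while the latter has a double pole. Transfer then yields $[x^n]F_{\mm}(x,1) \sim \tilde\alpha_m v_0^{-n}$ and $[x^n]\frac{\partial}{\partial q}F_{\mm}(x,q)\bigr|_{q=1} \sim \alpha_m\,n\,v_0^{-n}$, where $\tilde\alpha_m$ and $\alpha_m$ are the first- and second-order residues $\lim_{x\to v_0}(1-x/v_0)^k$ applied to the respective series. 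This step produces the linear factor of $n$ that appears in the statement.

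Finally, I would evaluate $\tilde\alpha_m$ and $\alpha_m$ in closed form by factoring $U_m(1/(2\sqrt{x}))$ via \eqref{eqchf} and separating the root $j=1$ responsible for the pole from the regular product over $j = 2,\ldots,m-1$. Forming the ratio $\alpha_m/\tilde\alpha_m$ then cancels one copy of $\prod_{j=2}^{m-1}(\cos(\pi/(m+1)) - \cos(j\pi/(m+1)))$ together with an appropriate power of $2\cos(\pi/(m+1))$, leaving precisely the constant displayed in the theorem. The principal obstacle is the algebraic bookkeeping in the double-pole limit for $\alpha_m$: one must correctly square the contribution of the linear factor $1 - 2\sqrt{x}\cos(\pi/(m+1))$ near $v_0$, keep track of signs in the products over $j\geq 2$, and verify that $\sum_{j=1}^{m-1}U_j^2(\cos(\pi/(m+1))) \neq 0$, so that the pole really is of order two. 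The latter is immediate since every summand is nonnegative and $U_1(\cos(\pi/(m+1))) = 2\cos(\pi/(m+1)) > 0$; once this is checked, the remainder of the derivation is routine algebraic simplification.
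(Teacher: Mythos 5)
Your proposal is correct and follows essentially the same route as the paper: it uses the coefficient-ratio formula together with the closed forms from Corollaries~\ref{cor1} and \ref{cor1a}, identifies the dominant singularity $v_0=1/(4\cos^2(\frac{\pi}{m+1}))$ coming from the largest root of $U_m$, exploits the simple-pole versus double-pole structure to produce the factor of $n$, and evaluates the two residues $\tilde\alpha_m$ and $\alpha_m$ via the factorization \eqref{eqchf} before taking their ratio. The extra checks you flag (non-vanishing of $\sum_{j=1}^{m-1}U_j^2(\cos(\frac{\pi}{m+1}))$ and of $U_{m-1}$ at the critical point, sign bookkeeping in the products) are sensible but do not change the argument, which matches the paper's.
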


For example, Theorem \ref{thmm21} for $m=3$ gives that
$\E^{321}(L_n)\sim\frac{3n}{4}$ as shown in \cite{MY}, and for $m=4$, we have
$\E^{4321}(L_n)\sim \left(2-\frac{3}{\sqrt{5}}\right)n$.

\subsection{Pattern $\tau=(m-1)m(m-2)(m-3)\cdots321$}
In this subsection, we study the pattern
$\hat\mm=(m-1)m(m-2)(m-3)\cdots321$.
\begin{corollary}\label{cor3a}Let $\hat\mm=(m-1)m(m-2)(m-3)\cdots321$ with $m\ge3$. Then
$$F_{\hat\mm}(x,q)=\frac{(1-x)U_{m-3}(t)-\sqrt x (1-x+xq)U_{m-4}(t)}{\sqrt x ((1-x)U_{m-2}(t)-\sqrt{x}(1-x+xq)U_{m-3}(t))},$$
where $t=\frac{1+x-xq}{2\sqrt x}$.
\end{corollary}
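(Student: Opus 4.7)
The plan is to prove Corollary~\ref{cor3a} by induction on $m\ge 3$, exploiting the fact that the pattern $\hat{\mathbf{m}}=(m-1)m(m-2)(m-3)\cdots 321$ ends in $1$ and is therefore directly accessible through Corollary~\ref{cor2}. The structural observation is that removing the trailing $1$ from $\widehat{\mathbf{m+1}}=m(m+1)(m-1)(m-2)\cdots 21$ leaves $m(m+1)(m-1)(m-2)\cdots 32$, whose reduced form is exactly $\hat{\mathbf{m}}$. Writing $\widehat{\mathbf{m+1}}=\rho\cdot 1$ with $\rho$ of reduced form $\hat{\mathbf{m}}$, Corollary~\ref{cor2} supplies the recursion
$$F_{\widehat{\mathbf{m+1}}}(x,q)=\frac{1}{1-xq-x(F_{\hat{\mathbf{m}}}(x,q)-1)}=\frac{1}{(1+x-xq)-xF_{\hat{\mathbf{m}}}(x,q)},$$
which drives the induction.

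For the base case $m=3$, since $\hat{\mathbf{3}}=231$, Corollary~\ref{pat3s} gives $F_{231}(x,q)=\frac{1-x}{1-x-xq}$. With the convention $U_{-1}(t)=0$, substituting $U_0(t)=1$, $U_1(t)=2t$ and the identity $2t\sqrt{x}=1+x-xq$ into the right-hand side of Corollary~\ref{cor3a} reduces it to the same expression after a short simplification, verifying the base case.

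For the inductive step I would introduce the shorthand
$$A_k:=(1-x)U_k(t)-\sqrt{x}(1-x+xq)U_{k-1}(t),$$
so that the inductive hypothesis reads $F_{\hat{\mathbf{m}}}(x,q)=A_{m-3}/(\sqrt{x}\,A_{m-2})$ and the target formula is $F_{\widehat{\mathbf{m+1}}}(x,q)=A_{m-2}/(\sqrt{x}\,A_{m-1})$. Plugging the hypothesis into the displayed recursion and using $1+x-xq=2t\sqrt{x}$ gives
$$F_{\widehat{\mathbf{m+1}}}(x,q)=\frac{A_{m-2}}{\sqrt{x}(2tA_{m-2}-A_{m-3})},$$
so everything reduces to the identity $2tA_{m-2}-A_{m-3}=A_{m-1}$. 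This falls out immediately from the Chebyshev three-term recurrence $2tU_k(t)=U_{k+1}(t)+U_{k-1}(t)$ applied termwise to each component of $A_{m-2}$, which yields $2tA_{m-2}=A_{m-1}+A_{m-3}$.

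The essential step is choosing the shorthand $A_k$ so that the Chebyshev telescoping becomes transparent; after that the induction is essentially mechanical. The only genuine care needed is the bookkeeping that the reduced form of the truncation of $\widehat{\mathbf{m+1}}$ is precisely $\hat{\mathbf{m}}$ (rather than a shifted variant), together with the verification that the $m=3$ endpoint is consistent with the convention $U_{-1}=0$.
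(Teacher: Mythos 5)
Your proof is correct and follows essentially the same route as the paper: induction on $m$ starting from $F_{\hat{\mathbf 3}}=F_{231}=\frac{1-x}{1-x-xq}$, with the inductive step driven by Corollary~\ref{cor2} (since the pattern ends in $1$ and the truncation reduces to $\hat\mm$) and closed by the Chebyshev three-term recurrence \eqref{eqch}. The shorthand $A_k$ is a cosmetic but harmless repackaging of the same computation the paper carries out termwise.
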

\begin{proof}
We proceed the proof by induction on $m$. By Example $\tau=231$ in
Section 2, we have $F_{\hat{\bf3}}(x,q)=\frac{1-x}{1-x-xq}$, so
the result holds for $m=3$. Assume that the result holds for $m-1$
and let us prove for $m$. By Corollary \ref{cor2}, we have
$$F_{\hat\mm}(x,q)=\frac{1}{1+x-xq-xF_{\hat{\mathbf m-1}}(x,q)}.$$
Thus by induction hypothesis, we have
\begin{align*}
&F_{\hat\mm}(x,q)=\frac{1}{2\sqrt{x}t-\sqrt{x}\frac{(1-x)U_{m-4}(t)-\sqrt
x (1-x+xq)U_{m-5}(t)}{
(1-x)U_{m-3}(t)-\sqrt{x}(1-x+xq)U_{m-4}(t)}}\\
&=\frac{(1-x)U_{m-3}(t)-\sqrt{x}(1-x+xq)U_{m-4}(t)}
{\sqrt{x}\left((1-x)(2tU_{m-3}(t)-U_{m-4}(t))-\sqrt{x}(1-x+xq)(2tU_{m-4}(t)-U_{m-5}(t))\right)},
\end{align*}
which, by \eqref{eqch}, implies
\begin{align*}
F_{\hat\mm}(x,q)=\frac{(1-x)U_{m-3}(t)-\sqrt x
(1-x+xq)U_{m-4}(t)}{\sqrt x
((1-x)U_{m-2}(t)-\sqrt{x}(1-x+xq)U_{m-3}(t))},
\end{align*}
which completes the proof.
\end{proof}

By Corollary \ref{cor3a} with $q=1$ and \eqref{eqch}, we have
$$F_{\hat\mm}(x,1)=\frac{U_{m-1}(\frac{1}{2\sqrt{x}})}{\sqrt{x}U_{m}(\frac{1}{2\sqrt{x}})}.$$
By induction on $m$, we can state the following result.

\begin{corollary}\label{cor3b}Let $\hat\mm=(m-1)m(m-2)(m-3)\cdots321$ with $m\geq4$. Then
$$\frac{\partial}{\partial q} F_{\hat\mm}(x,q)\Bigr|_{\substack{q=1}}=\frac{1}{U_m^2(\frac{1}{2\sqrt{x}})}\left(U_2(\frac{1}{2\sqrt{x}})+\sum_{j=2}^{m-1}U_j^2(\frac{1}{2\sqrt{x}})\right).$$
\end{corollary}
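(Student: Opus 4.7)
The plan is to prove the identity by induction on $m$, with $m=4$ as the base case, reusing the mechanism from Corollary~\ref{cor1a} essentially verbatim. The starting observation is that $\hat\mm=(m-1)m(m-2)\cdots21$ ends in the entry $1$, and that the reduced form of the prefix obtained by deleting this trailing $1$ is precisely $\hat{\mathbf{m-1}}$. Consequently Corollary~\ref{cor2}, applied with $\rho=\hat{\mathbf{m-1}}$, supplies the recurrence
\begin{equation*}
\frac{\partial}{\partial q}F_{\hat\mm}(x,q)\Bigr|_{q=1}
=xF_{\hat\mm}^2(x,1)\left(1+\frac{\partial}{\partial q}F_{\hat{\mathbf{m-1}}}(x,q)\Bigr|_{q=1}\right),
\end{equation*}
and the closed form $F_{\hat\mm}(x,1)=U_{m-1}/(\sqrt x\,U_m)$ displayed just above the statement collapses the prefactor to $U_{m-1}^2/U_m^2$, where I abbreviate $U_j:=U_j(1/(2\sqrt x))$.

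For the base case $m=4$, I would read $F_{\hat{\mathbf 3}}(x,q)=F_{231}(x,q)=(1-x)/(1-x-xq)$ from Corollary~\ref{pat3s}, differentiate to obtain $\partial_q F_{231}(x,q)|_{q=1}=x(1-x)/(1-2x)^2$, and verify using the explicit values $U_2=(1-x)/x$ and $U_3=(1-2x)/(x\sqrt x)$ that this quantity equals $(U_2+U_2^2)/U_3^2$. Plugging into the recurrence then produces $(U_3^2+U_2+U_2^2)/U_4^2$, which matches the $m=4$ instance of the claim.

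For the inductive step I would assume the formula for $m-1\geq 4$ and substitute it into the recurrence, obtaining
\begin{equation*}
\frac{\partial}{\partial q}F_{\hat\mm}(x,q)\Bigr|_{q=1}
=\frac{U_{m-1}^2}{U_m^2}\left(1+\frac{1}{U_{m-1}^2}\Bigl(U_2+\sum_{j=2}^{m-2}U_j^2\Bigr)\right)
=\frac{1}{U_m^2}\Bigl(U_2+\sum_{j=2}^{m-1}U_j^2\Bigr),
\end{equation*}
so the step is a one-line telescoping. The only place where genuine work is required is the base case: had $\partial_q F_{\hat{\mathbf 3}}|_{q=1}$ turned out to equal $(U_1^2+U_2^2)/U_3^2$, the final formula would mirror Corollary~\ref{cor1a} exactly, but the recursion actually produces the asymmetric quantity $(U_2+U_2^2)/U_3^2$. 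Identifying this small arithmetic discrepancy and verifying that it propagates unchanged through the induction is the main (and only) obstacle.
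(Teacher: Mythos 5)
Your proposal is correct and is essentially the paper's own argument: the paper proves this "by induction on $m$" using exactly the Corollary~\ref{cor2} recurrence $\partial_q F_{\hat\mm}|_{q=1}=xF_{\hat\mm}^2(x,1)\bigl(1+\partial_q F_{\hat{\mathbf{m-1}}}|_{q=1}\bigr)$ together with $F_{\hat\mm}(x,1)=U_{m-1}/(\sqrt{x}\,U_m)$, just as it did for Corollary~\ref{cor1a}. Your base-case computation $\partial_q F_{231}|_{q=1}=x(1-x)/(1-2x)^2=(U_2+U_2^2)/U_3^2$ and the telescoping step check out, and you correctly identify the source of the asymmetric $U_2$ term.
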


By similar arguments as in the proof of Theorem \ref{thmm21}, we
obtain the following result.
\begin{theorem}\label{thmx1mmx221}
Let $m\geq4$. As $n\rightarrow\infty$, we have
\begin{align*}
\E^{\hat\mm}(L_n)&\sim
\frac{U_2\left(\cos\left(\frac{\pi}{m+1}\right)\right)+\sum_{j=1}^{m-1}U_j^2\left(\cos\left(\frac{\pi}{m+1}\right)\right)}
{2^{m+1}\cos^3\left(\frac{\pi}{m+1}\right)U_{m-1}(\cos(\frac{\pi}{m+1}))\prod_{j=2}^{m-1}\left(\cos\left(\frac{\pi}{m+1}\right)-\cos\left(\frac{j\pi}{m+1}\right)\right)}n.
\end{align*}
\end{theorem}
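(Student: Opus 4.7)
The plan is to mimic verbatim the proof of Theorem \ref{thmm21}. The two ingredients are the rational function $F_{\hat\mm}(x,1)=U_{m-1}(1/(2\sqrt x))/(\sqrt x\,U_m(1/(2\sqrt x)))$ noted after Corollary \ref{cor3a} and the explicit form of $\frac{\partial}{\partial q}F_{\hat\mm}(x,q)|_{q=1}$ supplied by Corollary \ref{cor3b}. Both are rational functions whose only poles are zeros of $U_m(1/(2\sqrt x))$, so the singularity analysis of Theorem \ref{thmm21} applies essentially unchanged.

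First I would locate the dominant singularity. By \eqref{eqchf}, $U_m(1/(2\sqrt x))$ vanishes precisely at $x=1/(4\cos^2(j\pi/(m+1)))$ for $j=1,\ldots,m$, so the dominant singularity is $v_0=1/(4\cos^2(\pi/(m+1)))$. The function $F_{\hat\mm}(x,1)$ has a simple pole at $v_0$ (since $U_{m-1}(\cos(\pi/(m+1)))\neq 0$), whereas the factor $U_m^2$ in the denominator of Corollary \ref{cor3b} makes $\frac{\partial}{\partial q}F_{\hat\mm}(x,q)|_{q=1}$ have a double pole there. Standard meromorphic coefficient asymptotics then yield
\[
[x^n]F_{\hat\mm}(x,1)\sim \tilde\alpha_m\Bigl(4\cos^2\tfrac{\pi}{m+1}\Bigr)^n,\qquad [x^n]\frac{\partial}{\partial q}F_{\hat\mm}(x,q)\Big|_{q=1}\sim \hat\alpha_m\,n\Bigl(4\cos^2\tfrac{\pi}{m+1}\Bigr)^n,
\]
as $n\to\infty$, for constants $\tilde\alpha_m$, $\hat\alpha_m$ expressible as residue-type limits.

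Because $F_{\hat\mm}(x,1)=F_{\mm}(x,1)$, the constant $\tilde\alpha_m$ is identical to the one computed in the proof of Theorem \ref{thmm21} and need not be recomputed. The new constant $\hat\alpha_m$ is obtained as
\[
\hat\alpha_m=\lim_{x\to v_0}\frac{\bigl(1-4x\cos^2\tfrac{\pi}{m+1}\bigr)^2\bigl(U_2(1/(2\sqrt x))+\sum_{j=2}^{m-1}U_j^2(1/(2\sqrt x))\bigr)}{U_m^2(1/(2\sqrt x))},
\]
and it is evaluated by the same trick as in Theorem \ref{thmm21}: factor $U_m(1/(2\sqrt x))=2^m\prod_{j=1}^m(1/(2\sqrt x)-\cos(j\pi/(m+1)))$ via \eqref{eqchf} and cancel the $(j=1)$ factor against the vanishing numerator. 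The result matches the expression for $\alpha_m$ from the proof of Theorem \ref{thmm21} with the Chebyshev sum in the numerator replaced by $U_2(\cos(\pi/(m+1)))+\sum_{j=2}^{m-1}U_j^2(\cos(\pi/(m+1)))$.

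Finally, forming $\E^{\hat\mm}(L_n)\sim(\hat\alpha_m/\tilde\alpha_m)\,n$ and cancelling the common factor $4^m\cos^4(\pi/(m+1))\prod_{j=2}^{m-1}(\cos(\pi/(m+1))-\cos(j\pi/(m+1)))^2$ against one copy of the corresponding product appearing in $\tilde\alpha_m^{-1}$ leaves the product to the first power in the denominator, along with the factor $2^{m+1}\cos^3(\pi/(m+1))U_{m-1}(\cos(\pi/(m+1)))$, giving the stated formula. The only real difficulty is the algebraic bookkeeping of these cancellations; all the combinatorial and analytic content is carried by Corollary \ref{cor3b} and by the singularity-analysis template of Theorem \ref{thmm21}.
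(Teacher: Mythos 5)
Your argument is exactly the paper's: the paper's proof of Theorem~\ref{thmx1mmx221} is literally the single sentence ``by similar arguments as in the proof of Theorem~\ref{thmm21}'', and you have carried out precisely that singularity analysis, using $F_{\hat\mm}(x,1)=F_{\mm}(x,1)$ to reuse $\tilde\alpha_m$ and Corollary~\ref{cor3b} to get the new double-pole constant. One caveat: your computation yields the numerator $U_2+\sum_{j=2}^{m-1}U_j^2$ (as in Corollary~\ref{cor3b}), while the theorem as printed has $\sum_{j=1}^{m-1}$; a direct check at $m=4$ (where $F_{\hat{\mathbf 4}}(x,q)=\frac{1-x-qx}{(1-qx)^2-x}$ gives $\E^{3421}(L_n)\sim(1-\tfrac{1}{\sqrt5})n=\tfrac{2}{\sqrt5\,\phi}n$) confirms that the index should start at $j=2$, so the discrepancy is a typo in the printed statement rather than a gap in your proof.
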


\section{The case $S_n(312,\tau)$ where $\tau\in S_4(312)$}\label{3-4sec}
In this section, we present the results for random permutations from $S_n(312,\tau)$ where $\tau\in S_4(312)$. A summary of the results for all $\tau\in S_4(312)$ is given in Table~\ref{table4}. We present the details only for the two patterns, $\tau=1234$ and $\tau=1243$. Since the computations for other cases are very similar, the details are omitted.
\begin{table}[!t]
\begin{center}
\begin{tabular}{|| c | c| c|c|c||}\hline
$\tau$ & $F_{\tau}(x,q)$ & $\E^{\tau}(L_n)$ & Reference\\\hline\hline
&&&\\[-9pt]
1234& $1+\frac{xq}{1-x}+\frac{x^2q^2}{(1-x)^3}+\frac{x^3(1+x)q^3}{(1-x)^5}$ & $\frac{3(n^4-4n^3+9n^2-6n+4)}{n^4-4n^3+11n^2-8n+12}\rightarrow3$& Example \ref{ex1234}\\[4pt]\hline
&&&\\[-9pt]
%------
1243,1324&$1+\frac{xq(qx(2x-1)+(1-x)^2)}{(1-x-qx)^2(1-x)}$  & $\frac{2^{n-3}(n^2-n+4)}{(n-1)2^{n-2}+1}\sim\frac{n}{2}$ & Example \ref{ex1243}\\
2134&&& Theorem \ref{mainthm}\\\hline
&&&\\[-9pt]
%------
2314&$1+\frac{xq(1-x)}{(1-x)^2-qx}$& $\sim\frac{n}{\sqrt{5}}$& Theorem \ref{mainthm}\\
1342&&&\\\hline
&&&\\[-9pt]
%------
2143,3214&$\frac{1-x-qx}{(1-qx)^2-x}$&$\sim\frac{n}{\sqrt{5}}$& Theorem \ref{mainthm}\\
2431,3241&&&\\
3421,1432&&&\\\hline
&&&\\[-9pt]
%------
2341,4321&$\frac{(1-x)^3}{(1-x)^3-xq(1-x)^2-x^3q^2}$&$\sim\frac{(-5a^2+22a-9)n}{31}$ &Theorem \ref{mainthm}\\
&&{\tiny$a\approx2.46577\cdots$, $a^3-4a^2+5a-3=0$}&\\\hline
\end{tabular}
\end{center}
\caption{A summary of the results for $S_n(312,\tau)$ with $\tau \in S_4(312)$.}
\label{table4}
\end{table}

\begin{example}\label{ex1234}
By Theorem \ref{mainthm} with $\tau=1234$, we have
\begin{align*}
F_{1234}(x,q)&=1+xqF_{123}(x,q)+x(F_{12}(x,q)-F_{1}(x,q))F_{123}(x,q)\\
&+x(F_{123}(x,q)-F_{12}(x,q))F_{12}(x,q)+x(F_{1234}(x,q)-F_{123}(x,q))F_{1}(x,q).
\end{align*}
By Corollaries \ref{pat2s} and \ref{pat3s}, we have
$$F_{1234}(x,q)=1+\frac{xq}{1-x}+\frac{x^2q^2}{(1-x)^3}+\frac{x^3(1+x)q^3}{(1-x)^5},$$
which agrees with Theorem \ref{thinc} with $m=4$.
Therefore, we have
$$\E^{1234}(L_n)=\frac{3(n^4-4n^3+9n^2-6n+4)}{n^4-4n^3+11n^2-8n+12}$$
and
$$\E^{1234}(L_n^2)=\frac{3(3n^4-12n^3+23n^2-14n+4)}{n^4-4n^3+11n^2-8n+12}.$$
\end{example}

\begin{example}\label{ex1243}
By Theorem \ref{mainthm} with $\tau=1243$, we have
\begin{align*}
F_{1243}(x,q)&=1+xqF_{132}(x,q)+x(F_{1243}(x,q)-1)\\
&+x(F_{12}(x,q)-1)(F_{132}(x,q)-1)+x(F_{1243}(x,q)-F_{12}(x,q))(F_{21}(x,q)-1),
\end{align*}
which, by Corollaries \ref{pat2s} and \ref{pat3s}, leads to
$$F_{1243}(x,q)=1+\frac{xq(qx(2x-1)+(1-x)^2)}{(1-x-qx)^2(1-x)}.$$
Thus we have
$$\E^{1243}(L_n)=\frac{2^{n-3}(n^2-n+4)}{(n-1)2^{n-2}+1}$$
and
$$\E^{1243}(L_n^2)=\frac{2^{n-4}(n^3+5n+2)}{(n-1)2^{n-2}+1}.$$
\end{example}

\end{document}